\newcommand{\qed}{\hfill\rule{2mm}{3mm}\vspace{4mm}}
\numberwithin{equation}{section}
 \newtheorem{theorem}{Theorem}[section]
 \newtheorem{lemma}[theorem]{Lemma}
 \newtheorem{remark}{Remark}
 \def\<{\langle}\def\>{\rangle}
 \def\beqnn{\begin{eqnarray*}}\def\eeqnn{\end{eqnarray*}}
 \def\<{\langle}\def\>{\rangle}
\def\beqlb{\begin{eqnarray}}\def\eeqlb{\end{eqnarray}}
 \def\qed{\hfill$\Box$\medskip}
\begin{document}

\bigskip\bigskip
\noindent{\Large\bf Fixed points with  finite mean of the smoothing transform in random environments }\footnote{
\noindent  Supported by NSFC (NO.11531001).
}

\noindent{
Wenming Hong\footnote{ School of Mathematical Sciences
\& Laboratory of Mathematics and Complex Systems, Beijing Normal
University, Beijing 100875, P.R. China. Email: wmhong@bnu.edu.cn} ~ Xiaoyue Zhang\footnote{ School of Mathematical Sciences
\& Laboratory of Mathematics and Complex Systems, Beijing Normal
University, Beijing 100875, P.R. China. Email: zhangxiaoyue@mail.bnu.edu.cn}

\noindent{
(Beijing Normal University)
}

}

\begin{center}
\begin{minipage}{12cm}
\begin{center}\textbf{Abstract}\end{center}
\footnotesize
At each time $n\in\mathbb{N}$, let $\bar{Y}^{(n)}=(y_{1}^{(n)},y_{2}^{(n)},\cdots)$ be a random sequence of non-negative numbers that are ultimately zero in a random environment $\xi=(\xi_{n})_{n\in\mathbb{N}}$ in time, which satisfies for each $n\in\mathbb{N}$ and a.e. $\xi,~E_{\xi}[\sum_{i\in\mathbb{N}_{+}}y_{i}^{(n)}(\xi)]=1.$ The existence and uniqueness of the non-negative fixed points of the associated smoothing transform in random environments is considered. These fixed points are solutions of the distributional equation for $a.e.~\xi,~Z(\xi)\overset{d}{=}\sum_{i\in\mathbb{N}_{+}}y_{i}^{(0)}(\xi)Z_{i}(T\xi),$
where when given the environment $\xi$, $Z_{i}(T\xi)~(i\in\mathbb{N}_{+})$ are $i.i.d.$ non-negative random variables, and distributed the same as $Z(\xi)$.
 As an application, the martingale convergence of the branching random walk in random environments is given as well. The classical  results by Biggins (1977) has been extended to the random environment situation.

\mbox{}\textbf{Keywords:}\quad smoothing transform,  functional equation,  random environment,  branching random walk,  martingales; \\
\mbox{}\textbf{Mathematics Subject Classification}:  Primary 60J80;
secondary 60G42.

\end{minipage}
\end{center}

\section{ Introduction and main results}

A random environment is modeled as an independent and identically distributed sequence of random variables, $\xi=(\xi_{n})_{n\in\mathbb{N}}$, indexed by the time $n\in\mathbb{N}=\{0,1,2,\cdots\},$ taking values in some measurable probability space $(\Theta,\mathcal{E})$. Without loss of generality we can suppose that $\xi$ is defined on the product space $(\Theta^{\mathbb{N}},\mathcal{E}^{\otimes\mathbb{N}},\tau)$, where $\tau$ is the law of $\xi$. Each realization of $\xi_{n}$ corresponds to a distribution $\eta_{n}=\eta(\xi_{n})$ on $[0,\infty)$. When the environment $\xi=(\xi_{n})_{n\in\mathbb{N}}$ is given, at each time $n$, there exists a random sequence of non-negative numbers that are ultimately zero, $\bar{Y}^{(n)}=(y_{1}^{(n)},y_{2}^{(n)},\cdots)$, of the distribution $\eta_{n}=\eta(\xi_{n})$.

For each realization $\xi\in\Theta^{\mathbb{N}}$ of the environment sequence, let $(\Gamma,\mathcal{G}, P_{\xi})$ be the probability space under which the process is defined. The probability $P_{\xi}$ is usually called quenched law. The total probability space can be formulated as the product space $(\Theta^{\mathbb{N}}\times\Gamma,\mathcal{E}^{\mathbb{N}}\otimes\mathcal{G},\mathbb{P})$, where $\mathbb{P}$ is defined that for all measurable and positive function $g$, we have $\int g \mathrm{d}\mathbb{P}=\int_{\Theta^\mathbb{N}}(\int_{\Gamma}g(\xi,y)\mathrm{d}P_{\xi}(y))\mathrm{d}\tau(\xi).$ The total probability $\mathbb{P}$ is usually called annealed law. Let $\mathbb{E}$ denote the expectation with respect to $\mathbb{P}$ and $E_{\xi}$ denote the expectation with respect to $P_{\xi}$.

Let $T$ denote the shift operator, given by if $\xi=(\xi_{0},\xi_{1},\cdots)$, then $T\xi=(\xi_{1},\xi_{2},\cdots)$. We are interested in the existence of solutions of the equation in distribution,
\begin{eqnarray}\label{samed}
Z(\xi)\overset{d}{=}\sum_{i\in\mathbb{N}_{+}}y_{i}^{(0)}(\xi)Z_{i}(T\xi)~~\text{ for a.e. } \xi ,
\end{eqnarray}
where $\overset{d}{=}$ denotes equality in distribution. When given the environment $\xi$, $Z_{i}(T\xi)~(i\in\mathbb{N}_{+})$ are independent and identically distributed non-negative random variables, and distributed the same as $Z(\xi)$, which is called the fixed point of the smoothing transform in random environment. The distribution of $Z_{i}(T\xi)~(i\in\mathbb{N}_{+})$ is determined by the environment $T\xi$, since $y_{i}^{(0)}(\xi)$ is determined by $\xi_{0}$, we have $Z_{i}(T\xi)~(i\in\mathbb{N}_{+})$ are independent with
$\bar{Y}^{(0)}(\xi)=(y_{1}^{(0)}(\xi),y_{2}^{(0)}(\xi),\cdots)$ and the distribution of $\sum_{i\in\mathbb{N}_{+}}y_{i}^{(0)}(\xi)Z_{i}(T\xi)$ is determined by $\xi$. Thus the question whether there exists $Z$ satisfies for a.e. $\xi$, $Z(\xi)\overset{d}{=}\sum_{i\in\mathbb{N}_{+}}y_{i}^{(0)}(\xi)Z_{i}(T\xi)$ is reasonable. For non-negative $Z$, (\ref{samed}) can be expressed naturally in terms of Laplace transform, it becomes the functional equation, for $u\geq0$,
\begin{eqnarray}\label{samel}
\phi(\xi,u)=E_{\xi}\left[\prod_{i\in\mathbb{N}_{+}}\phi\left(T\xi,uy_{i}^{(0)}(\xi)\right)\right]~~\text{ for a.e. } \xi ,
\end{eqnarray}
where $\phi(\xi,u)=E_{\xi}e^{-uZ(\xi)}$. If for $a.e.~\xi$, (\ref{samel}) is satisfied with some $\phi(\xi,u)$, then we call that (\ref{samel}) has a solution, and $Z(\xi)$ is said a solution of (\ref{samel}) or (\ref{samed}), i.e., the fixed points of the smoothing transform $H$, where the transform $H$ is defined as
$$ H\phi(T\xi,u)=E_{\xi}\left[\prod_{i\in\mathbb{N}_{+}}\phi\Big(T\xi,uy_{i}^{(0)}(\xi)\Big)\right].$$

The aim of this paper is to investigate the necessary and sufficient conditions under which  the solution with finite mean of (\ref{samel}) exits, and the uniqueness as well. Here the ``finite mean" is under the annealed probability. Specifically, for $c>0$, let $\mathcal{M}_{c}$ be the class of all probability measures on $[0,\infty)$ with the finite non-zero annealed mean $  c$, i.e.,
$$\mathcal{M}_{c}=\left\{\mu_{\xi}:  \text{for any  $\xi\in\Theta^{\mathbb{N}}$, $\mu_{\xi}$  is the probability distribution on $[0,\infty)$ with } \mathbb{E}\int x \mu_{\xi}(dx)=c   \right\},$$
in terms of Laplace transform, let
$$\mathcal{L}_{  c}=\left\{\phi(\xi,u)=E_{\xi}e^{-uX(\xi)}: \text{the probability distribution of $X$ belongs to } \mathcal{M}_{  c}, u\geq0, \xi\in\Theta^{\mathbb{N}}\right\}.$$ If for $a.e.~\xi$, (\ref{samel}) is satisfied with some $\phi\in\mathcal{L}_{  c}$, then we call that (\ref{samel}) has an $\mathcal{L}_{  c}$-solution, i.e., for $a.e.~\xi$,  $Z(\xi)$ is said an $\mathcal{L}_{  c}$-solution of  (\ref{samed}) iff it is a solution and $\mathbb{E} Z(\xi)=c.$\\

The fixed point of the smoothing transform has been investigated by many authors. Biggins (1977) got the necessary and sufficient conditions under which  the solution with finite mean  exits, and the uniqueness as well.
 Durrett and Liggett(1983) considered the  conditions under which  the solution (with possible infinite mean)  exits;  Liu (1998,2000) weakened the conditions of Durrett and Liggett(1983); Biggins and Kyprianou (1997,2005) further obtained the uniqueness in the boundary case; and some other related work, see for example, Alsmeyer, Biggins and Meiners (2012), Iksanov and Jurek (2002), Iksanov (2004), Caliebe and R\"{A}osler (2003) and Caliebe (2003), etc.

We will follow the line of Biggins (1977) to prove  and generalised the results (with finite mean) to the random environment situation ( However, it seems that there are some essential difficulties to generalise (by the analysis method) the (possible) infinite mean case (Durrett and Liggett(1983)) to random environment). To this end, we assume that for each $n\in\mathbb{N}$ and $\tau$-a.e. $\xi$,
\begin{eqnarray}\label{assume}
E_{\xi}\Big[\sum_{i\in\mathbb{N}_{+}}y_{i}^{(n)}(\xi)\Big]=1,~~~~P_{\xi}\Big[\sum_{i\in\mathbb{N}_{+}}\mathbbm{1}_{\{y_{i}^{(n)}(\xi)>0\}}<\infty\Big]=1,~~~~\mathbb{E}\log\Big[E_{\xi}\sum_{i\in\mathbb{N}_{+}}\mathbbm{1}_{\{y_{i}^{(n)}(\xi)>0\}}\Big]>0.
\end{eqnarray}
The conditions in (\ref{assume}) is  reasonable, which for example is satisfied for the branching random walk in random environment, see section \ref{application}.

In the following, we consider the model under condition (\ref{assume}).  Firstly, we have the sufficient conditions for the existence and uniqueness of the  solution of  (\ref{samel}).
\begin{theorem}\label{thm1}
If
\begin{eqnarray}
 &&\mathbb{E}\bigg[\sum_{i\in\mathbb{N}_{+}}y_{i}^{(0)}(\xi)\left(\log^{+}y_{i}^{(0)}(\xi)\right)^{2}\bigg]<\infty,\label{c1} \\
&&\mathbb{E}\bigg[\Big(\sum_{i\in\mathbb{N}_{+}}y_{i}^{(0)}(\xi)\Big)\Big|\log\Big(\sum_{i\in\mathbb{N}_{+}}y_{i}^{(0)}(\xi)\Big)\Big|\bigg]<\infty ,\label{c2}
\end{eqnarray}
and
\begin{eqnarray}\label{c3}
\mathbb{E}\bigg[\sum_{i\in\mathbb{N}_{+}}y_{i}^{(0)}(\xi)\log y_{i}^{(0)}(\xi)\bigg]<0.
\end{eqnarray}
then the equation for $a.e.~\xi$, $\phi(\xi,u)=H\phi(T\xi,u)$ has a unique solution in $\mathcal{L}_{1}$.
\end{theorem}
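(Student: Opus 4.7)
My plan is to adapt the martingale-limit approach of Biggins (1977) to the random environment setting. To produce a candidate fixed point, I would first construct the natural martingale on the Ulam--Harris tree. Let $\mathbb{U} = \bigcup_{n\geq 0} \mathbb{N}_+^n$; for $v = (v_1,\ldots,v_n) \in \mathbb{U}$ set
\[
Y(v)(\xi) := \prod_{k=1}^{n} y_{v_k}^{(k-1)}(T^{k-1}\xi),\qquad W_n(\xi) := \sum_{|v|=n} Y(v)(\xi).
\]
The branching decomposition $W_{n+1}(\xi) = \sum_{i} y_i^{(0)}(\xi)\, W_n^{(i)}(T\xi)$, together with $E_\xi[\sum_i y_i^{(n)}] = 1$ from (\ref{assume}), shows that $(W_n(\xi))_{n\geq 0}$ is a non-negative $P_\xi$-martingale with $E_\xi W_n = 1$, hence converges $P_\xi$-a.s.\ to some $W(\xi)$ with $E_\xi W \leq 1$. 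If this convergence takes place in $L^1(\mathbb{P})$, then passage to the limit in the recursion yields $W(\xi) \overset{d}{=} \sum_i y_i^{(0)}(\xi) W^{(i)}(T\xi)$, so $W$ is an $\mathcal{L}_1$-fixed point.

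The principal task is therefore to establish uniform integrability of $(W_n)$ under the annealed law $\mathbb{P}$. Following Biggins I would use a truncation and conditional-variance argument: decompose $W_{n+1} - W_n$ into its $P_\xi$-conditionally orthogonal contributions from the generation-$n$ offspring, with the offspring split according to whether $y_i^{(0)} \leq 1$ or not. The second-moment log bound (\ref{c1}) controls the contribution of the small offspring through a summable conditional variance, while (\ref{c2}) disposes of the atypically large contributions via an Orlicz-type bound on the total mass $\sum_i y_i^{(0)}$. Condition (\ref{c3}) delivers the exponential $n$-decay required to sum these bounds, because the size-biased spine associated with $W_n$ evolves as a random walk in random environment with positive drift $-\mathbb{E}[\sum_i y_i^{(0)}\log y_i^{(0)}] > 0$; an ergodic argument on $(\Theta^\mathbb{N},\tau)$ then yields $W_n \to W$ in $L^1(\mathbb{P})$ and so $\mathbb{E} W = 1$.

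For uniqueness, given any $\mathcal{L}_1$-solution $\tilde{Z}$, I would couple it to the tree by choosing, for each $v \in \mathbb{U}$, an i.i.d.\ (given $\xi$) copy $\tilde{Z}^{(v)}$ distributed as $\tilde{Z}(T^{|v|}\xi)$. Iterating (\ref{samed}) gives $\tilde{Z}(\xi) \overset{d}{=} M_n(\xi) := \sum_{|v|=n} Y(v)(\xi)\tilde{Z}^{(v)}$ for every $n$. Since each $E_{T^n\xi}\tilde{Z}^{(v)}=1$, the conditional expectation of $M_n$ given the first $n$ generations equals $W_n(\xi)$. After truncating each $\tilde{Z}^{(v)}$ at a level $K$ (so that one may use the variance method enabled by (\ref{c1})--(\ref{c2})), the $L^1$-norm of the residual $M_n - W_n$ is dominated by a combination of $\mathbb{E}[W_n\mathbbm{1}_{\{\tilde{Z}>K\}}]$ and $\mathbb{E}[\max_{|v|=n}Y(v)(\xi)]$, both of which vanish (the latter by the negative-drift spine analysis from (\ref{c3}), after sending first $n\to\infty$ and then $K\to\infty$). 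Hence $M_n \to W$ in $L^1(\mathbb{P})$, and since $M_n\overset{d}{=}\tilde{Z}$ for every $n$, we deduce $\tilde{Z}\overset{d}{=} W$ for $\tau$-a.e.\ $\xi$.

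The main obstacle is the uniform integrability step. In the classical i.i.d.\ Biggins framework the annealed and quenched laws coincide; here they differ, and the spine analysis picks up the random environment's own fluctuations through a random walk in random environment rather than an ordinary random walk. Verifying that (\ref{c1})--(\ref{c3}) jointly dominate both the environmental randomness (via the $(\log^+)^2$-moment in (\ref{c1}), strictly stronger than Biggins' original $\log^+$-moment) and the branching randomness is the technical heart of the argument, and is precisely where the random environment forces a strengthening of Biggins' hypotheses.
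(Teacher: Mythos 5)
Your proposal takes a genuinely different route from the paper. The paper proceeds purely analytically: it iterates the smoothing transform on Laplace transforms, setting $\phi_0(\xi,u)=e^{-u}$, $\phi_n(\xi,u)=H\phi_{n-1}(T\xi,u)$, and then controls the increments $g_n(\xi,u)=u^{-1}|\phi_n(\xi,u)-\phi_{n-1}(\xi,u)|$ via the contraction inequality $g_{n+1}(\xi,u)\le E_\xi\bigl[g_1\bigl(T^n\xi,ue^{S_n(\xi)}\bigr)\bigr]$, where $S_n$ is the size-biased walk. Summability of $\sum_n g_n$ then follows from a random-walk estimate (Lemma~\ref{youdong}, using \eqref{c1}) combined with Doney's lemma (using \eqref{c2}), and $u\downarrow 0$ shows the limit has derivative $1$ at $0$, i.e.\ lies in $\mathcal L_1$. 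Uniqueness is handled with the same contraction inequality applied to the difference of two solutions. Your construction of $W_n$ on the Ulam--Harris tree is the random-variable counterpart of the same object (the paper's $c_n$), but you then propose to prove $L^1(\mathbb P)$-convergence by a probabilistic truncation/conditional-variance argument in the Lyons spine style rather than by the Laplace-transform contraction. That is a legitimate alternative route, and in this setting it has the advantage of making the martingale and spine structure visible; the paper's analytic route has the advantage that the key estimates reduce cleanly to two ready-made random-walk lemmas.

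Two points in the sketch deserve attention. First, a structural observation you have somewhat obscured: you describe the spine as ``a random walk in random environment with positive drift.'' Under the quenched law $P_\xi$ the spine increments are indeed independent but not identically distributed, which would complicate the estimates; the paper's Lemma~2.2 shows that under the \emph{annealed} law $\mathbb P$ the increments $X_n$ become genuinely i.i.d., so $S_n$ is a plain i.i.d.\ random walk. Your summability and Egorov-type steps should be carried out under $\mathbb P$, where this i.i.d.\ structure makes both the LLN and the large-deviations estimate routine; as written, a quenched random-walk-in-random-environment analysis would be working much harder than necessary. Second, a small but substantive error in the allocation of the hypotheses: you say \eqref{c1} ``controls the contribution of the small offspring,'' but $\sum_i y_i(\log^+ y_i)^2$ vanishes on $\{y_i\le 1\}$, so \eqref{c1} in fact controls the \emph{large} offspring (and, equivalently, the upper tail of the annealed spine increment $X_0$, which is exactly what Lemma~\ref{youdong} needs). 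The small-offspring/total-mass contribution is what \eqref{c2} and the $\log^-$ part of \eqref{c3} handle. If you correct that misattribution and carry out the annealed-spine analysis, the proposal is a workable alternative to the paper's Laplace-transform proof, though the details of the truncation step would need to be supplied with care since they are precisely where the strengthening from a $\log^+$ to a $(\log^+)^2$ moment in \eqref{c1} will be consumed.
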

On the other hand, if one of the condition (\ref{c2}) or (\ref{c3}) dose not hold, we will show that no $\mathcal{L}_{1}$-solution exists.

\begin{theorem}\label{thm2}
When $\mathbb{E}\left[\sum_{i\in\mathbb{N}_{+}}y_{i}^{(0)}(\xi)\log y_{i}^{(0)}(\xi)\right]$ exists and is non-negative, the equation for $a.e.~\xi$, $\phi(\xi)=H\phi(T\xi)$ has no $\mathcal{L}_{1}$-solution.
\end{theorem}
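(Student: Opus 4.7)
I argue by contradiction: assume an $\mathcal{L}_1$-solution $Z$ exists, so $\mathbb{E}Z=1$. Taking $E_\xi$ on both sides of (\ref{samed}) and using the normalisation $E_\xi[\sum_i y_i^{(0)}(\xi)]=1$ from (\ref{assume}) yields $E_\xi Z(\xi)=E_{T\xi}Z(T\xi)$ for $\tau$-a.e.\ $\xi$; ergodicity of the shift $T$ on $(\Theta^\mathbb{N},\tau)$ then forces $E_\xi Z(\xi)=1$ $\tau$-a.s.

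Build the branching random walk in random environment (BRWre) on the Ulam--Harris tree: conditionally on $\xi$, each particle at generation $n$ independently produces offspring carrying the weights of an independent copy of $\bar Y^{(n)}(\xi)$. For a vertex $v$ of generation $n$ let $L(v)$ be the product of weights along its ancestral line and set $W_n(\xi):=\sum_{|v|=n}L(v)$; by (\ref{assume}), $W_n$ is a nonnegative $P_\xi$-martingale of mean one. Iterating (\ref{samed}) $n$ times produces the distributional identity $Z(\xi)\overset{d}{=}\sum_{|v|=n}L(v)Z_v(T^n\xi)$ with $(Z_v)_{|v|=n}$ i.i.d.\ copies of $Z(T^n\xi)$ given $\mathcal{F}_n:=\sigma(L(u):|u|\le n)$; taking $E_\xi[\cdot\mid\mathcal{F}_n]$ and using $E_{T^n\xi}Z=1$ identifies $W_n=E_\xi[Z\mid\mathcal{F}_n]$ under the canonical coupling. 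Hence $W_n\to W_\infty$ in $L^1(P_\xi)$, $W_\infty\overset{d}{=}Z$ under $P_\xi$, and $E_\xi W_\infty=1$ for $\tau$-a.e.\ $\xi$.

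The contradiction is obtained by proving that, under $\mathbb{E}[\sum_i y_i^{(0)}\log y_i^{(0)}]\ge 0$, in fact $W_\infty=0$ $P_\xi$-a.s. For this I pass to the size-bias/spine measure $\hat{\mathbb{P}}$, defined by $d\hat P_\xi|_{\mathcal{F}_n}=W_n\,dP_\xi|_{\mathcal{F}_n}$: under $\hat{\mathbb{P}}$ a distinguished ray $(v_n)_{n\ge 0}$ is selected so that each spine vertex is chosen from the children of its parent proportionally to the corresponding weight, with sibling subtrees evolving as independent unbiased BRWre's. The many-to-one identity yields
\begin{eqnarray*}
\mathbb{E}_{\hat{\mathbb{P}}}\bigl[\log y^{(0)}_{v_0}\bigr]=\mathbb{E}\Bigl[\sum_{i\in\mathbb{N}_+}y_i^{(0)}\log y_i^{(0)}\Bigr]\ge 0,
\end{eqnarray*}
and $(\log y^{(k)}_{v_k})_{k\ge 0}$ is stationary ergodic under $\hat{\mathbb{P}}$. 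Hence $\log L(v_n)=\sum_{k=0}^{n-1}\log y^{(k)}_{v_k}$ is a stationary-ergodic random walk with nonnegative drift, so $\limsup_n\log L(v_n)=+\infty$ $\hat{\mathbb{P}}$-a.s.\ by Birkhoff (strict drift) or a stationary Chung--Fuchs recurrence argument (vanishing drift). Since $W_n\ge L(v_n)$ along the spine, the classical size-bias dichotomy ($E_\xi W_\infty=\hat P_\xi(W_\infty<\infty)$, cf.\ Lyons--Pemantle--Peres) forces $E_\xi W_\infty=0$, contradicting the preceding paragraph.

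The principal obstacle is the critical case $\mathbb{E}[\sum_i y_i^{(0)}\log y_i^{(0)}]=0$: one has to extend Chung--Fuchs recurrence to a stationary-ergodic (not i.i.d.) random walk with mean-zero increments, and justify that the spine increments are nondegenerate—which follows from the branching assumption $\mathbb{E}\log E_\xi\sum_i\mathbbm{1}_{\{y_i^{(0)}>0\}}>0$ in (\ref{assume}) combined with entropy, ruling out the degenerate scenario $y^{(k)}_{v_k}\equiv 1$ along the spine.
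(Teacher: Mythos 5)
Your route is genuinely different from the paper's. The paper works purely analytically with Laplace transforms: it sets $\phi^{*}(\xi,u)=(1-\phi(\xi,u))/u$, defines a remainder $A(\xi,u)\ge 0$, derives the iterated identity
\begin{equation*}
\phi^{*}(\xi,u)+E_{\xi}\Big[\sum_{n\ge 0}A\big(T^{n}\xi,ue^{S_{n}(\xi)}\big)\Big]=\lim_{n}E_{\xi}\big[\phi^{*}\big(T^{n}\xi,ue^{S_{n}(\xi)}\big)\big],
\end{equation*}
and then argues by cases on the associated random walk $S_{n}$: if it drifts to $+\infty$ the right-hand side vanishes while the left-hand side is strictly positive; if it is recurrent the series of $A$-terms diverges while the other terms are finite. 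Your argument is the probabilistic spine/size-bias one, essentially the route of Biggins--Kyprianou (2004) and Kuhlbusch (2004), which the authors explicitly acknowledge in the closing remark of Section 3. Both are legitimate; the paper's is self-contained and elementary, yours imports the Lyons--Pemantle--Peres machinery but gives the structural ``why'' (the spine sees an i.i.d.\ walk with non-negative drift and therefore cannot shrink).

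That said, your proof as written is not complete, and the admitted obstacle is illusory. You write that the spine increments $(\log y^{(k)}_{v_k})$ are merely ``stationary ergodic'' under $\hat{\mathbb{P}}$ and worry about extending Chung--Fuchs to that generality. In fact they are i.i.d.: each increment has annealed law $G$, namely $\hat{\mathbb{E}}[f(\log y^{(k)}_{v_k})]=\mathbb{E}[\sum_{i}y^{(k)}_{i}f(\log y^{(k)}_{i})]$, it depends only on $\xi_{k}$ together with internal randomness that is independent across $k$ given $\xi$, and the marginal of $\xi$ is unchanged by the change of measure because $E_{\xi}[W_{n}]=1$. This is exactly the content of the paper's Lemma 2.3 (the $X_{n}$'s are i.i.d.\ under $\mathbb{P}$) and of the representation $S_{n}=\sum_{k<n}X_{k}$. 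With i.i.d.\ increments the critical case $\kappa=0$ is handled by the ordinary Chung--Fuchs theorem once degeneracy is excluded, and your degeneracy exclusion is correct but should be made explicit: $X_{0}\equiv 0$ would force $E_{\xi}[\sum_{i}\mathbbm{1}_{\{y^{(0)}_{i}>0\}}]=E_{\xi}[\sum_{i}y^{(0)}_{i}]=1$ a.s., contradicting the third condition in (\ref{assume}). Two further points to tighten: the identification $W_{n}=E_{\xi}[Z\mid\mathcal{F}_{n}]$ requires building a recursive tree process (a Kolmogorov-extension coupling of a consistent family $(Z_{v})$ with $Z_{v}=\sum_{w\text{ child of }v}(L(w)/L(v))Z_{w}$); you wave at this with ``the canonical coupling'' but it needs to be stated. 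Finally, the assertion $W_{\infty}\overset{d}{=}Z$ is unnecessary for the contradiction and is not justified (it is the endogeny question); all you need, and all the closed-martingale argument gives, is $E_{\xi}W_{\infty}=E_{\xi}Z=1$.
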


\begin{theorem}\label{thm3}
When
\begin{eqnarray*}
 &&\mathbb{E}\bigg[\sum_{i\in\mathbb{N}_{+}}y_{i}^{(0)}(\xi)\left(\log^{+}y_{i}^{(0)}(\xi)\right)^{2}\bigg]<\infty,\\
&&-\infty<\mathbb{E}\bigg[\sum_{i\in\mathbb{N}_{+}}y_{i}^{(0)}(\xi)\log y_{i}^{(0)}(\xi)\bigg]<0,\\
\end{eqnarray*}
and
\begin{eqnarray*}
\mathbb{E}\bigg[\Big(\sum_{i\in\mathbb{N}_{+}}y_{i}^{(0)}(\xi)\Big)\Big|\log\Big(\sum_{i\in\mathbb{N}_{+}}y_{i}^{(0)}(\xi)\Big)\Big|\bigg]=\infty,
\end{eqnarray*}
then the equation for $a.e.~\xi$, $\phi(\xi,u)=H\phi(T\xi,u)$ has no $\mathcal{L}_{1}$-solution.
\end{theorem}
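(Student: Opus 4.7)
The plan is by contradiction. Assume an $\mathcal{L}_{1}$-solution $Z(\xi)$ exists; I will derive a conflict with the failure of (\ref{c2}).

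First I would upgrade the annealed mean $\mathbb{E}Z=1$ to the quenched statement $E_{\xi}Z=1$ for $\tau$-a.e.\ $\xi$. Writing $m(\xi):=E_{\xi}Z(\xi)$ and taking quenched expectation on both sides of (\ref{samed}), the independence of $\{Z_{i}(T\xi)\}$ from $\bar{Y}^{(0)}(\xi)$ together with $E_{\xi}\sum_{i}y_{i}^{(0)}=1$ yields $m(\xi)=m(T\xi)$; ergodicity of the shift on the i.i.d.\ environment then forces $m\equiv \mathbb{E}Z=1$. Next, iterating (\ref{samed}) $n$ times,
$$Z(\xi)\overset{d}{=}\sum_{|v|=n}\pi_{v}(\xi)\,Z_{v}(T^{n}\xi),\qquad \pi_{v}(\xi):=\prod_{k=1}^{n}y^{(k-1)}_{v|k}(\xi),$$
with $Z_{v}$ i.i.d.\ copies of $Z(T^{n}\xi)$ independent of $\mathcal{F}_{n}:=\sigma(\bar{Y}^{(0)},\dots,\bar{Y}^{(n-1)})$. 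Taking $E_{\xi}[\,\cdot\mid\mathcal{F}_{n}]$ and using the quenched-mean-one fact identifies the intrinsic martingale $W_{n}(\xi):=\sum_{|v|=n}\pi_{v}(\xi)=E_{\xi}[Z\mid\mathcal{F}_{n}]$, which is therefore uniformly integrable and converges in $L^{1}(P_{\xi})$ to some $W_{\infty}$ with $E_{\xi}W_{\infty}=1$, and hence $\mathbb{E}W_{\infty}=1$.

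The crux is then to show that the hypotheses of the theorem force $\mathbb{E}W_{\infty}<1$, delivering the contradiction. My plan is a Lyons-type size-biasing / spine construction adapted to the random environment: define $\tilde{\mathbb{P}}$ on the marked tree with a distinguished ray by $d\tilde{\mathbb{P}}/d\mathbb{P}|_{\mathcal{F}_{n}}=W_{n}$, under which, by the standard dichotomy, $\mathbb{E}W_{\infty}=1$ iff $\tilde{\mathbb{P}}(W_{\infty}<\infty)=1$. The spine picks a size-biased child at each generation, producing the decomposition
$$W_{n}=\hat{\pi}_{n}+\sum_{k=1}^{n}\hat{\pi}_{k-1}R_{k},$$
where $\hat{\pi}_{n}$ is the product of spine weights and $R_{k}\ge0$ is the contribution of the sibling subtrees attached at level $k$. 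Birkhoff's ergodic theorem applied to the stationary environment, combined with the second-moment hypothesis (\ref{c1}) for fluctuation control, gives $n^{-1}\log\hat{\pi}_{n}\to\mathbb{E}[\sum_{i}y_{i}\log y_{i}]<0$, so that $\hat{\pi}_{n}$ decays exponentially. However, the assumption $\mathbb{E}[(\sum_{i}y_{i})|\log\sum_{i}y_{i}|]=\infty$, once the size-biasing is unwound, becomes $\tilde{\mathbb{E}}\log^{+}R_{k}=\infty$, and a Borel--Cantelli argument along the spine produces infinitely many levels at which $\log R_{k}$ exceeds $(1+\varepsilon)k|\mathbb{E}\sum_{i}y_{i}\log y_{i}|$. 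On those levels $\hat{\pi}_{k-1}R_{k}\not\to0$, whence $W_{\infty}=\infty$ $\tilde{\mathbb{P}}$-a.s., contradicting $\mathbb{E}W_{\infty}=1$.

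The main obstacle will be the sibling-residue analysis in a time-varying environment. Translating the annealed integrability hypothesis on $\sum_{i}y_{i}^{(0)}$ into the $\tilde{\mathbb{E}}\log^{+}R_{k}$ statement requires careful quenched-vs.-annealed bookkeeping, because the size-biased offspring law at level $k$ depends on the environment coordinate $\xi_{k}$ and $R_{k}$ mixes the size-biased with the ordinary children. Moreover, for the Borel--Cantelli step one must control deviations of $\log\hat{\pi}_{n}$ from its linear drift beyond a mere LLN, and it is precisely this that forces the $(\log^{+})^{2}$ moment hypothesis (\ref{c1}) to appear, in exact analogy with Biggins (1977) in the deterministic-environment case.
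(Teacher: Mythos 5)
You pursue a genuinely different route from the paper. The paper argues purely analytically through Laplace transforms: it introduces the defect $A(\xi,u)$ in (\ref{Au}), passes to the telescoping identity (\ref{2.7}), lower-bounds $\mathbb{E}A(\xi,u)$ by $\mathbb{E}\big[\beta(T\xi)\psi(\xi,\beta(T\xi)u)\big]$ up to correctors that the $(\log^{+})^{2}$ hypothesis and Lemma~\ref{youdong} keep summable, and then makes $\sum_{n}\mathbb{E}\psi(\xi,\delta e^{n(\kappa-\epsilon)})$ diverge via Doney's Lemma~3.4, forcing $1-\mathbb{E}\phi^{*}(\xi,1)=\infty$. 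Your plan is instead a Lyons-type size-biasing / spine argument, which is exactly the probabilistic route the paper's closing Remark attributes to Biggins--Kyprianou (2004) and Kuhlbusch (2004). The paper's approach stays self-contained inside the Laplace-transform machinery common to Theorems~\ref{thm1}--\ref{thm3}; yours is more conceptual and tracks the modern weighted-branching literature, at the price of importing the spine apparatus.

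Two points in your sketch need to be made precise. First, the identity $W_{n}(\xi)=E_{\xi}[Z\mid\mathcal{F}_{n}]$ does not follow directly from (\ref{samed}), which is only a distributional identity: you must carry out a Kolmogorov-extension construction that puts the fixed point $Z$ and the entire weighted tree on one probability space with $Z_{v}=\sum_{i}y_{i}Z_{vi}$ holding almost surely at every node (consistency of the finite-dimensional marginals being exactly the fixed-point property), after which $\sum_{|v|=n}\pi_{v}Z_{v}\equiv Z$ a.s.\ and $W_{n}=E_{\xi}[Z\mid\mathcal{F}_{n}]$, whence $\mathbb{E}W_{\infty}=1$. Second, your $R_{k}$ is ambiguous, and if it means the sibling contribution $\sum_{j\neq\mathrm{spine}}y_{j}^{(k-1)}W_{\infty}^{(j)}$ it is liable to vanish $\tilde{\mathbb{P}}$-a.s.\ in precisely the regime you are trying to rule out, since the sibling subtrees carry the ordinary law under which $W_{\infty}$ may be degenerate. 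The robust version uses only the one-step spine brood $\hat{Y}_{k}=\sum_{j}y_{j}^{(k-1)}$, giving the elementary bound $W_{n}\ge\hat{\pi}_{k-1}\hat{Y}_{k}-\hat{\pi}_{k}$. Then, because $\mathbb{E}\big[(\sum_{i}y_{i})\log^{-}(\sum_{i}y_{i})\big]\le e^{-1}$ always holds, the failure of (\ref{c2}) is equivalent to $\mathbb{E}\big[(\sum_{i}y_{i})\log^{+}(\sum_{i}y_{i})\big]=\infty$, hence $\tilde{\mathbb{E}}\log^{+}\hat{Y}_{1}=\infty$; and since the $\hat{Y}_{k}$ are i.i.d.\ under $\tilde{\mathbb{P}}$ (the level-$k$ brood depends only on $\xi_{k-1}$ and the environment coordinates are i.i.d.), Borel--Cantelli together with the spine LLN $n^{-1}\log\hat{\pi}_{n}\to\kappa<0$ gives $\limsup_{n}W_{n}=\infty$ $\tilde{\mathbb{P}}$-a.s., which is the contradiction you want. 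With these repairs your argument goes through.
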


After 3 preparation Lemmas about the random environment have been specified, the proofs for the main results is given in section \ref{proof}; As an application,  the Biggins martingale convergence theorem for the branching random walk in a random environment is obtained in section \ref{application}.

\section{ Proofs of the main results}\label{proof}

Recall that $Z(\xi)$ is  an $\mathcal{L}_{  c}$-solution of  (\ref{samed}) if it is a solution  and  the annealed mean $\mathbb{E} Z(\xi)=c.$  Firstly, we will show that in this situation, the quenched mean is $c$ as well for $a.e.~\xi$, which is important in the following proofs for our results.
\begin{lemma}\label{l1}
Assume that $\phi(\xi,u)=E_{\xi}e^{-uZ(\xi)}$ is an $\mathcal{L}_{  c}$-solution to $(\ref{samel})$, then for $a.e.~\xi$, $E_{\xi}Z(\xi)=  c$.
\end{lemma}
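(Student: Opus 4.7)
The plan is to take quenched expectations in (\ref{samed}) and show that $m(\xi):=E_\xi Z(\xi)$ satisfies $m(\xi)=m(T\xi)$ for $\tau$-a.e.\ $\xi$; ergodicity of the Bernoulli shift $T$ under the i.i.d.\ law $\tau$ then forces $m$ to be $\tau$-a.s.\ constant, and the constant must be $c$ by the annealed-mean assumption. The only real technical point is justifying a monotone-convergence interchange and invoking the conditional-distribution statement built into (\ref{samed}); the conceptual step is the appeal to ergodicity of the shift on the product space.

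First, $m:\Theta^{\mathbb{N}}\to[0,\infty]$ is measurable, and since $\phi\in\mathcal{L}_c$ we have $\mathbb{E} Z=c<\infty$, so Fubini yields $\mathbb{E}[m(\xi)]=c$ and hence $m(\xi)<\infty$ for $\tau$-a.e.\ $\xi$. Fix such a $\xi$. Under $P_\xi$ the variables $Z_i(T\xi)$ are i.i.d.\ with quenched law $\phi(T\xi,\cdot)$ and are independent of $\bar Y^{(0)}(\xi)$, so by non-negativity and monotone convergence
\begin{eqnarray*}
m(\xi) &=& E_\xi\Big[\sum_{i\in\mathbb{N}_+}y_i^{(0)}(\xi)Z_i(T\xi)\Big]
= \sum_{i\in\mathbb{N}_+}E_\xi\big[y_i^{(0)}(\xi)\big]\,m(T\xi)\\
&=& m(T\xi)\cdot E_\xi\Big[\sum_{i\in\mathbb{N}_+}y_i^{(0)}(\xi)\Big] = m(T\xi),
\end{eqnarray*}
the last equality using the normalisation in (\ref{assume}).

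Thus $m=m\circ T$ holds $\tau$-a.s. Because $\tau$ is a product measure (i.i.d.\ coordinates) and $T$ is the shift, $(\Theta^{\mathbb{N}},\mathcal{E}^{\otimes\mathbb{N}},\tau,T)$ is a Bernoulli system and hence ergodic, so every $T$-invariant measurable function is $\tau$-a.s.\ constant. Therefore $m(\xi)\equiv c'$ for some $c'\in[0,\infty)$, and integrating against $\tau$ gives $c=\mathbb{E}[m(\xi)]=c'$, so $E_\xi Z(\xi)=c$ for $\tau$-a.e.\ $\xi$, as required. The proof is essentially soft, and the hardest part is psychological: recognizing that a single application of ergodicity of the environment sequence bridges the quenched and annealed means.
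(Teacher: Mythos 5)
Your argument is correct and follows essentially the same route as the paper: take quenched expectations of the distributional equation to get $E_\xi Z(\xi)=E_{T\xi}Z(T\xi)$, invoke ergodicity of the shift on the i.i.d.\ environment, and pin down the constant via the annealed mean. The only cosmetic difference is that the paper phrases the ergodicity step through the $T$-invariance of the sets $\{\xi:E_\xi Z(\xi)\leq c\}$ and $\{\xi:E_\xi Z(\xi)\geq c\}$, whereas you argue directly that the $T$-invariant function $m=E_\xi Z(\xi)$ is a.s.\ constant; these are equivalent.
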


\begin{proof}
Since $\phi$ is an $\mathcal{L}_{  c}$-solution to (\ref{samel}) we have
\begin{eqnarray}\label{le1}
\text{for $u\geq0,~~a.e.~\xi$,  }~~~~~ E_{\xi}e^{-uZ(\xi)}=E_{\xi}\left[\prod_{i\in\mathbb{N}_{+}}E_{T\xi}\left[e^{-uy_{i}^{(0)}(\xi)Z(T\xi)}\right]\right].
\end{eqnarray}

Let $A=\{\xi:E_{\xi}Z(\xi)\leq   c\}$, suppose that $Z_{i}(T\xi)(i\in\mathbb{N}_{+})$ are independent copies of $Z(T\xi)$. From (\ref{le1}) we have
\begin{eqnarray}
Z(\xi)\overset{d}{=}\sum_{i\in\mathbb{N}_{+}}y_{i}^{(0)}(\xi)Z_{i}(T\xi)~~\text{ for a.e. } \xi ,
\end{eqnarray}
combined with (\ref{assume}) we have
\begin{eqnarray*}
E_{\xi}Z(\xi)=E_{\xi}\sum_{i\in\mathbb{N}_{+}}y_{i}^{(0)}(\xi)Z_{i}(T{\xi})=E_{T\xi}Z(T\xi).
\end{eqnarray*}
Thus, $TA=A,~i.e.$ A is a $T$-invariant set, since $T$ is ergodic,
$\tau(A)=0\text{  or   }1.$
Since $\mathbb{E}Z=  c$, it can only be $\tau(\{\xi:E_{\xi}Z(\xi)\leq  c\})=1.$

In a similar way we can prove that $\tau(\{\xi:E_{\xi}Z(\xi)\geq  c\})=1.$
Thus $$\tau(\{\xi:E_{\xi}Z(\xi)=  c\})=1,$$ i.e. for $a.e.~\xi$, $E_{\xi}Z(\xi)=  c$.
\qed
\end{proof}

\begin{remark}
If $\phi(\xi,u)\in\mathcal{L}_{  c}(  c>0)$ is a solution of (\ref{samel}), then $\phi(\xi,\frac{u}{  c})\in\mathcal{L}_{1}$ and is also a solution of $(\ref{samel})$. That means if $(\ref{samel})$ has an $\mathcal{L}_{  c}$-solution($  c>0$), then $(\ref{samel})$ must have an $\mathcal{L}_{1}$-solution.
\end{remark}

\

Since for $a.e.~\xi$, $E_{\xi}\Big[\sum_{i\in\mathbb{N}_{+}}y_{i}^{(0)}(\xi)\Big]=1$ and $E_{\xi}\Big[\sum_{y_{i}^{(0)}(\xi)\leq y}y_{i}^{(0)}(\xi)\Big]\downarrow 0$ as $y\downarrow 0,$ then for $a.s.~\xi$, we can define the distribution function $G_{\xi}$ by the formula
$$G_{\xi}(\log y)=E_{\xi}\left[\sum_{y_{i}^{(0)}(\xi)\leq y}y_{i}^{(0)}(\xi)\right].$$
Since $y_{i}^{(0)}(T^{n}\xi)~(n\geq 0)$ is determined by $\xi_{n}$, the distribution function $G_{T^{n}\xi}$ is only determined by $\xi_{n}$. Let $X_{n}(\xi)~(n\in\mathbb{N})$ be independent random variables with the distribution function $G_{T^{n}\xi}$, then we have the following property.
\begin{lemma}
Under the annealed law, $(X_{n})_{n\in\mathbb{N}}$ are independent and identically distributed random variables.
\end{lemma}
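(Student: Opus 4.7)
The plan is to compute a generic annealed joint moment of the vector $(X_0,X_1,\dots,X_k)$ and show that it factorises into identical one-dimensional integrals. Fix $k\in\mathbb{N}$ and bounded Borel test functions $f_0,\dots,f_k$. Because, under the quenched law $P_\xi$, the variables $X_0,\dots,X_k$ are independent with respective distribution functions $G_\xi,G_{T\xi},\dots,G_{T^k\xi}$, the definition of the annealed law gives
\begin{eqnarray*}
\mathbb{E}\Big[\prod_{n=0}^{k}f_{n}(X_{n})\Big]
=\int_{\Theta^{\mathbb{N}}}\prod_{n=0}^{k}\Big(\int f_{n}(x)\,dG_{T^{n}\xi}(x)\Big)\,d\tau(\xi).
\end{eqnarray*}

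The key observation I would use next is that, as remarked just before the statement, $G_{T^{n}\xi}$ depends on $\xi$ only through the coordinate $\xi_{n}$. Hence each factor $\int f_{n}(x)\,dG_{T^{n}\xi}(x)$ is a measurable function of $\xi_{n}$ alone. Since $\tau$ is a product measure (the environment coordinates $\xi_{0},\xi_{1},\dots$ are i.i.d.\ under $\tau$), Fubini's theorem yields the factorisation
\begin{eqnarray*}
\mathbb{E}\Big[\prod_{n=0}^{k}f_{n}(X_{n})\Big]
=\prod_{n=0}^{k}\int_{\Theta^{\mathbb{N}}}\Big(\int f_{n}(x)\,dG_{T^{n}\xi}(x)\Big)\,d\tau(\xi)
=\prod_{n=0}^{k}\mathbb{E} f_{n}(X_{n}).
\end{eqnarray*}
This establishes independence under $\mathbb{P}$.

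For identical distribution, I would note that, because $\xi_{n}$ has the same law as $\xi_{0}$ under $\tau$, a change of variable $\xi\mapsto T^{n}\xi$ (or equivalently the identical-law argument applied coordinate-wise) shows that $\int\int f(x)\,dG_{T^{n}\xi}(x)\,d\tau(\xi)=\int\int f(x)\,dG_{\xi}(x)\,d\tau(\xi)=\mathbb{E} f(X_{0})$ for every bounded Borel $f$. Combining this with the factorisation above proves that $(X_{n})_{n\in\mathbb{N}}$ are i.i.d.\ under the annealed law.

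I do not anticipate a genuine obstacle: the statement is essentially a bookkeeping consequence of (i) quenched conditional independence of the $X_{n}$, (ii) the single-coordinate dependence $G_{T^{n}\xi}=G(\xi_{n})$, and (iii) the product structure of $\tau$. The only delicate point worth flagging in the written proof is measurability of $\xi\mapsto\int f_{n}(x)\,dG_{T^{n}\xi}(x)$, which follows once we check measurability of $\xi\mapsto G_{\xi}(\,\cdot\,)$, itself inherited from the measurable construction of $\bar Y^{(0)}$ from $\xi_{0}$.
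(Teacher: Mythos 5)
Your proof is correct and takes essentially the same approach as the paper: both arguments rest on quenched conditional independence of the $X_n$, the fact that $G_{T^n\xi}$ depends on $\xi$ only through $\xi_n$, and the i.i.d.\ product structure of $\tau$. The only cosmetic difference is that you test against bounded Borel functions whereas the paper tests against indicator functions of Borel sets; your measurability remark is a reasonable extra precaution but not a departure.
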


\begin{proof}
From the definition of $X_{n}$ we have for any $n\geq0,~0<y<\infty$,
$$\mathbb{P}(X_{n}\leq\log y)=\mathbb{E}\big[G_{T^{n}\xi}(\log y)\big]=\mathbb{E}\big[G_{\xi}(\log y)\big]=\mathbb{E}\bigg[\sum_{y_{i}^{(0)}(\xi)\leq y}y_{i}^{(0)}(\xi)\bigg],$$
therefore $(X_{n})_{n\in\mathbb{N}}$ are identically distributed random variables. Next we justify the independence of $(X_{n})_{n\in\mathbb{N}}$. For any sequence $(A_{n})_{n\geq0}$ in $\mathcal{B}$,
\begin{eqnarray*}
\mathbb{P}(X_{n}\in A_{n},\forall n\geq0)&=&\mathbb{E}\big[P_{\xi}(X_{n}(\xi)\in A_{n},\forall n\geq0)\big]\\
&=&\mathbb{E}\bigg[\prod_{n\geq0}P_{\xi}(X_{n}(\xi)\in A_{n})\bigg]\\
&=&\prod_{n\geq0}\mathbb{E}\big[P_{\xi}(X_{n}(\xi)\in A_{n})\big]\\
&=&\prod_{n\geq0}\mathbb{P}(X_{n}\in A_{n}),
\end{eqnarray*}
the second equality comes from the fact that when given the environment $\xi$, $(X_{n}(\xi))_{n\in\mathbb{N}}$ are independent random variables, the third equality is due to the assumption that the environment $(\xi_{n})_{n\in\mathbb{N}}$ are $i.i.d.$ and the fact that the distribution of $X_{n}(\xi)$ is only determined by $\xi_{n}$.
\qed
\end{proof}

For $n\in\mathbb{N}_{+}$, let $S_{n}=\sum_{k=0}^{n-1}X_{k},~S_{0}=0$, since we have proved that $(X_{k})_{k\geq0}$ are $i.i.d$, $S_{n}$ is a random walk with
$$\frac{S_{n}}{n}\to\mathbb{E}[X_{0}]=\mathbb{E}\left[\sum_{i\in\mathbb{N}_{+}}y_{i}^{(0)}(\xi)\log y_{i}^{(0)}(\xi)\right],~~\mathbb{P}\text{-}a.e.$$

\begin{lemma}\label{youdong}
Assume that $\mathbb{E}\left[\sum_{i\in\mathbb{N}_{+}}y_{i}^{(0)}(\xi)\left(\log^{+}y_{i}^{(0)}(\xi)\right)^{2}\right]<\infty$, then $\sum_{n\in\mathbb{N}}\mathbb{P}[S_{n}\geq cn]<\infty$ for all $c>\mathbb{E}[X_{0}]$.
\end{lemma}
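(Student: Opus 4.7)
The strategy is to recast the hypothesis as a positive-part second moment condition on $X_{0}$ and then deduce the summability from a one-sided Hsu--Robbins type tail bound.

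\emph{Reduction.} From $G_{\xi}(\log y)=E_{\xi}\big[\sum_{y_{i}^{(0)}(\xi)\leq y}y_{i}^{(0)}(\xi)\big]$ and a monotone class argument, for every non-negative Borel $f$
$$\mathbb{E}[f(X_{0})]=\mathbb{E}\Big[\sum_{i\in\mathbb{N}_{+}}y_{i}^{(0)}(\xi)\,f\big(\log y_{i}^{(0)}(\xi)\big)\Big].$$
Taking $f(x)=(x^{+})^{2}$ converts the assumption into $\mathbb{E}[(X_{0}^{+})^{2}]<\infty$, so in particular $\mu:=\mathbb{E}[X_{0}]\in[-\infty,\infty)$ is well defined. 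Set $\varepsilon:=c-\mu>0$. If $\mu=-\infty$, I first replace each $X_{k}$ by $X_{k}\vee(-N)$ with $N$ so large that the resulting (finite) drift remains strictly below $c$; this only increases $S_{n}$, so upper-tail bounds pass to the original walk. Henceforth I assume $\mu\in\mathbb{R}$ and put $W_{k}:=X_{k}-\mu$, an i.i.d. mean-zero sequence with $\mathbb{E}[(W_{0}^{+})^{2}]<\infty$. The claim reduces to showing $\sum_{n}\mathbb{P}\big[\sum_{k<n}W_{k}\geq n\varepsilon\big]<\infty$.

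\emph{Main estimate.} Let $a:=\mathbb{E}[W_{0}^{+}]=\mathbb{E}[W_{0}^{-}]$; the case $a=0$ forces $W_{0}\equiv 0$ and is trivial, so assume $a>0$. From the elementary inclusion
$$\Big\{\sum_{k<n}W_{k}\geq n\varepsilon\Big\}\subseteq\Big\{\sum_{k<n}(W_{k}^{+}-a)\geq\tfrac{n\varepsilon}{2}\Big\}\cup\Big\{\sum_{k<n}W_{k}^{-}\leq n\big(a-\tfrac{\varepsilon}{2}\big)\Big\}$$
it suffices to bound the two events separately. The first is controlled by the Hsu--Robbins--Erd\H{o}s theorem applied to the mean-zero i.i.d. sequence $(W_{k}^{+}-a)_{k}$, whose second moment is at most $\mathbb{E}[(W_{0}^{+})^{2}]<\infty$; this immediately gives a summable bound. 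The second is a lower large deviation for a non-negative sum: because $W_{k}^{-}\geq 0$, the Laplace transform $\lambda\mapsto\mathbb{E}[e^{-\lambda W_{0}^{-}}]$ is finite for every $\lambda\geq 0$, and dominated convergence yields $\lambda^{-1}\big(1-\mathbb{E}[e^{-\lambda W_{0}^{-}}]\big)\to a$ as $\lambda\downarrow 0$, so for $\lambda>0$ sufficiently small one has $\mathbb{E}[e^{-\lambda W_{0}^{-}}]\leq e^{-\lambda(a-\varepsilon/4)}$. A Markov bound on $e^{-\lambda\sum_{k<n}W_{k}^{-}}$ then yields the geometric estimate $\mathbb{P}\big[\sum_{k<n}W_{k}^{-}\leq n(a-\varepsilon/2)\big]\leq e^{-n\lambda\varepsilon/4}$, which is trivially summable.

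\emph{Main obstacle.} The delicate point is that the hypothesis controls only the \emph{positive}-part second moment of $X_{0}$, so a direct Chebyshev estimate on $\sum_{k<n}W_{k}$ is unavailable since the variance of $W_{0}$ may be infinite because of a heavy left tail. The decomposition $W_{k}=W_{k}^{+}-W_{k}^{-}$ isolates the asymmetric tails and applies the appropriate tool to each: the classical Hsu--Robbins bound where a second moment is available, and Cram\'er--Chernoff on $\sum_{k<n}W_{k}^{-}$ using only the finite first moment, non-negativity automatically ensuring that the Laplace transform is everywhere finite.
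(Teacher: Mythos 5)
Your proof is correct, but it takes a genuinely different route from the paper: the paper disposes of this lemma in one line by translating the hypothesis into $\mathbb{E}[(X_0^+)^2]<\infty$ (exactly as you do in your reduction step) and then citing Lemma~1 of Biggins (1977), whereas you reprove that cited result from scratch. Your decomposition argument is sound. The set inclusion $\bigl\{\sum_{k<n}W_k\geq n\varepsilon\bigr\}\subseteq\bigl\{\sum_{k<n}(W_k^{+}-a)\geq n\varepsilon/2\bigr\}\cup\bigl\{\sum_{k<n}W_k^{-}\leq n(a-\varepsilon/2)\bigr\}$ holds because if neither right-hand event occurs then $\sum W_k=\sum(W_k^{+}-a)+na-\sum W_k^{-}<n\varepsilon/2+na-n(a-\varepsilon/2)=n\varepsilon$. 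The Hsu--Robbins--Erd\H{o}s bound applies since $\operatorname{Var}(W_0^{+})\leq\mathbb{E}[(W_0^{+})^2]<\infty$, and the Chernoff estimate on $\sum W_k^{-}$ is legitimate because $W_0^{-}\geq0$ makes $\mathbb{E}[e^{-\lambda W_0^{-}}]$ finite for all $\lambda\geq0$ regardless of whether $W_0^{-}$ has a finite second moment; the degenerate case $a=0$ and the case $\mu=-\infty$ (via truncation from below, which only enlarges $S_n$) are both handled correctly. What your route buys is self-containment: you expose exactly why only the one-sided second moment is needed, by routing the heavy left tail through a Laplace-transform estimate that needs only a first moment. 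What the paper's route buys is brevity, delegating the hard part to a standard reference. Either is acceptable; yours is closer to a textbook proof of the underlying tail-sum lemma.
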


\begin{proof}
Note that in this case $(X_{k})_{k\geq0}$ are $i.i.d$, $$\mathbb{E}[(X_{0}^{+})^{2}]=\mathbb{E}\left[\sum_{i\in\mathbb{N}_{+}}y_{i}^{(0)}(\xi)\left(\log^{+}y_{i}^{(0)}(\xi)\right)^{2}\right]<\infty.$$ From Lemma 1 in \cite{BI77}, we get this result.
\qed
\end{proof}

\

We are now to prove the Theorems. The proofs are followed the line of Biggins (1977), with the above 3 preparation Lemmas about the random environment.

\begin{proof}\textbf{of Theorem~\ref{thm1} }
For $a.e.~\xi, u\geq 0$, let
\begin{eqnarray}\label{diedai}
\phi_{0}(\xi,u)=e^{-u},~~\phi_{n}(\xi,u)=H\phi_{n-1}(T\xi,u).
\end{eqnarray}
It can be easily checked that for each $n\in\mathbb{N}$, $\phi_{n}(\xi,u)$ is the quenched Laplace transform of some random variables under the environment $\xi$, thus we can assume that $\phi_{n}(\xi,u)=E_{\xi}e^{-uc_{n}(\xi)}$. From the iteration relationship (\ref{diedai}), we have
\begin{eqnarray*}
E_{\xi}e^{-uc_{n}(\xi)}=H\phi_{n-1}(T\xi,u)=E_{\xi}\Big[e^{-u\sum_{i\in\mathbb{N}_{+}}y_{i}^{(0)}(\xi)c_{n-1}^{i}(T\xi)}\Big],
\end{eqnarray*}
where $\big(c_{n-1}^{i}(T\xi)\big)_{(i\in\mathbb{N}_{+})}$ are independent copies of $c_{n-1}(T\xi)$. Since $\big(c_{n-1}^{i}(T\xi)\big)_{(i\in\mathbb{N}_{+})}$ are independent with $\big(y_{i}^{(0)}(\xi)\big)_{(i\in\mathbb{N}_{+})}$ and $E_{\xi}\big[\sum_{i\in\mathbb{N}_{+}}y_{i}^{(0)}(\xi)\big]=1$, we have for $n\geq1$,
$$E_{\xi}c_{n}(\xi)=E_{\xi}\bigg[\sum_{i\in\mathbb{N}_{+}}y_{i}^{(0)}(\xi)c_{n-1}^{i}(T\xi)\bigg]=E_{T\xi}c_{n-1}(T\xi),$$
iterating this gives $E_{\xi}c_{n}(\xi)=E_{T^{n}\xi}c_{0}(T^{n}\xi)=1$, where the last equality comes from the fact that $c_{0}\equiv1$.

Therefore when given the environment $\xi$, if $\{\phi_{n}(\xi,u)\}$ converges, the limit must be a Laplace transform under the environment $\xi$ (essentially because the constant mean implies tightness). What's more, if for $a.e.~\xi$, $\{\phi_{n}(\xi,u)\}$ converges, the limit is also a solution to (\ref{samel}); if for $a.e.~\xi$, the derivative of the limit at $u=0$ is one then this solution is in $\mathcal{L}_{1}$.

Let
\begin{equation*}
g_{n}(\xi,u)=u^{-1}|\phi_{n}(\xi,u)-\phi_{n-1}(\xi,u)|,
\end{equation*}
then
\begin{eqnarray*}
g_{n+1}(\xi,u)&=&u^{-1}\bigg|E_{\xi}\Big[\prod_{i\in\mathbb{N}_{+}}\phi_{n}\left(T\xi,uy_{i}^{(0)}(\xi)\right)\Big]-E_{\xi}\Big[\prod_{i\in\mathbb{N}_{+}}\phi_{n-1}\left(T\xi,uy_{i}^{(0)}(\xi)\right)\Big]\bigg|\\
&\leq&u^{-1}E_{\xi}\bigg|\prod_{i\in\mathbb{N}_{+}}\phi_{n}\left(T\xi,uy_{i}^{(0)}(\xi)\right)-\prod_{i\in\mathbb{N}_{+}}\phi_{n-1}\left(T\xi,uy_{i}^{(0)}(\xi)\right)\bigg|\\
&\leq&u^{-1}E_{\xi}\bigg[\sum_{i\in\mathbb{N}_{+}}\Big|\phi_{n}\left(T\xi,uy_{i}^{(0)}(\xi)\right)-\phi_{n-1}\left(T\xi,uy_{i}^{(0)}(\xi)\right)\Big|\bigg]\\
&=&E_{\xi}\Big[\sum_{i\in\mathbb{N}_{+}}y_{i}^{(0)}(\xi)g_{n}\left(T\xi,uy_{i}^{(0)}(\xi)\right)\Big]\\
&=&E_{\xi}\Big[g_{n}\left(T\xi,ue^{X_{0}(\xi)}\right)\Big].
\end{eqnarray*}
Iterating this gives
\begin{equation}\label{star}
g_{n+1}(\xi,u)\leq E_{\xi}\Big[g_{1}\left(T^{n}\xi,ue^{S_{n}(\xi)}\right)\Big].
\end{equation}
For $a.e.~\xi$, let
\begin{equation}\label{psi}
\psi(\xi,u)=\frac{\phi_{1}(\xi,u)-1+u}{u}.
\end{equation}
Then $\psi(\xi,u)$ is increasing in $u$ and for $a.e.~\xi$, $g_{1}(\xi,u)\leq\psi(\xi,u)$. Therefore, for any $c$, for $a.e.~\xi$,
\begin{equation}\label{gn}
\sum_{n=2}^{\infty}g_{n}(\xi,u)\leq \sum_{n=1}^{\infty}E_{\xi}\Big[\psi(T^{n}\xi,ue^{S_{n}(\xi)})\Big]\leq\sum_{n=1}^{\infty}P_{\xi}[S_{n}(\xi)\geq -cn]+\sum_{n=1}^{\infty}\psi(T^{n}\xi,ue^{-cn}),
\end{equation}
thus,
\begin{equation}
\mathbb{E}\sum_{n=2}^{\infty}g_{n}(\xi,u)\leq\mathbb{E}\sum_{n=1}^{\infty}P_{\xi}[S_{n}(\xi)\geq -cn]+\mathbb{E}\sum_{n=1}^{\infty}\psi(T^{n}\xi,ue^{-cn}).
\end{equation}
If we take $c$ satisfying the inequalities $0>-c>\mathbb{E}[X_{0}]=\mathbb{E}\left[\sum_{i\in\mathbb{N}_{+}}y_{i}^{(0)}(\xi)\log y_{i}^{(0)}(\xi)\right]$, then since $\mathbb{E}\left[\sum_{i\in\mathbb{N}_{+}}y_{i}^{(0)}(\xi)\left(\log^{+}y_{i}^{(0)}(\xi)\right)^{2}\right]<\infty$, Lemma \ref{youdong} shows that
$$\mathbb{E}\sum_{n=1}^{\infty}P_{\xi}[S_{n}(\xi)\geq -cn]=\sum_{n=1}^{\infty}\mathbb{P}[S_{n}(\xi)\geq -cn]<\infty.$$
Also since $\mathbb{E}\Big[\Big(\sum_{i\in\mathbb{N}_{+}}y_{i}^{(0)}(\xi)\Big)\Big|\log\Big(\sum_{i\in\mathbb{N}_{+}}y_{i}^{(0)}(\xi)\Big)\Big|\Big]<\infty$, $\mathbb{E}\Big[\sum_{i\in\mathbb{N}_{+}}y_{i}^{(0)}(\xi)\Big]=1$, from Doney ((1972), Lemma 3.4), we have
$$\mathbb{E}\sum_{n=1}^{\infty}\psi(T^{n}\xi,ue^{-cn})=\sum_{n=1}^{\infty}\mathbb{E}\psi(T^{n}\xi,ue^{-cn})=\sum_{n=1}^{\infty}\mathbb{E}\psi(\xi,ue^{-cn})<\infty.$$
As a result, we have $\mathbb{E}\sum_{n=2}^{\infty}g_{n}(\xi,u)<\infty$, that means for $a.e.~\xi$, $\sum_{n=2}^{\infty}g_{n}(\xi,u)<\infty$ and so for $a.e.~\xi$, $\lim_{n\to\infty}\phi_{n}(\xi,u)$ must exist. Furthermore for $a.e.~\xi$,
\begin{equation*}
u^{-1}|\lim_{n\to\infty}\phi_{n}(\xi,u)-e^{-u}|\leq \sum_{n=1}^{\infty}g_{n}(\xi,u)~~\text{and  } ~\psi(\xi,0+)=0,
\end{equation*}
therefore we may let $u\downarrow0$ in (\ref{gn}) to show that for $a.e.~\xi$, the derivative of $\lim_{n\to\infty}\phi_{n}(\xi,u)$ at $u=0$ is $1$, thus $\lim_{n\to\infty}\phi_{n}(\xi,u)$ is a solution of (\ref{samel}) in $\mathcal{L}_{1}$.

If $\phi$ and $\widetilde{\phi}$ are two $\mathcal{L}_{1}$-solutions to (\ref{samel}), for $a.e.~\xi$, let $$g(\xi,u)=u^{-1}|\phi(\xi,u)-\widetilde{\phi}(\xi,u)|;$$ then $g(\xi,0+)=0$ by Lemma \ref{l1}. As in (\ref{star}), $g(\xi,u)\leq E_{\xi}[g(T^{n}\xi,ue^{S_{n}(\xi)})]$ and since $e^{S_{n}(\xi)}\to 0$ almost surely we can see that for any $u>0,~a.e.~\xi,~g(\xi,u)=0.$ That means the equation has a unique solution in $\mathcal{L}_{1}$.

\qed
\end{proof}

\

On the other hand, if one of the condition (\ref{c2}) or (\ref{c3}) dose not hold, we will show that no $\mathcal{L}_{1}$-solution exists. To this end,
for $a.e.~\xi$, define the function $A(\xi,u)$ by the formula
\begin{equation}\label{Au}
uA(\xi,u)=E_{\xi}\left[\sum_{i\in\mathbb{N}_{+}}\left(1-\phi\left(T\xi,uy_{i}^{(0)}(\xi)\right)\right)-\Big(1-\prod_{i\in\mathbb{N}_{+}}\phi\left(T\xi,uy_{i}^{(0)}(\xi)\right)\Big)\right];
\end{equation}
then when $u>0$, it is easy to check that $A(\xi,u)$ is continuous and strictly greater than zero. Let
\begin{equation}\label{00}
\phi^{*}(\xi,u)=\frac{1-\phi(\xi,u)}{u},
\end{equation}
then since $\phi(\xi,u)=E_{\xi}\Big[\prod_{i\in\mathbb{N}_{+}}\phi\Big(T\xi,uy_{i}^{(0)}(\xi)\Big)\Big]$, we have
\begin{eqnarray*}
\phi^{*}(\xi,u)+A(\xi,u)&=&E_{\xi}\left[\frac{\sum_{i\in\mathbb{N}_{+}}\Big(1-\phi\left(T\xi,uy_{i}^{(0)}(\xi)\right)\Big)}{u}\right]\\
&=&E_{\xi}\left[\sum_{i\in\mathbb{N}_{+}}\frac{y_{i}^{(0)}(\xi)\left(1-\phi\left(T\xi,uy_{i}^{(0)}(\xi)\right)\right)}{uy_{i}^{(0)}(\xi)}\right]\\
&=&E_{\xi}\phi^{*}\left(T\xi,ue^{X_{0}(\xi)}\right).
\end{eqnarray*}

Iterating this gives for $a.e.~\xi$ ($S_{0}=0$),
\begin{equation}\label{2.7}
\phi^{*}(\xi,u)+E_{\xi}\left[\sum_{n=0}^{\infty}A\left(T^{n}\xi,ue^{S_{n}(\xi)}\right)\right]=\lim_{n\to\infty}E_{\xi}\left[\phi^{*}\left(T^{n}\xi,ue^{S_{n}(\xi)}\right)\right].
\end{equation}

\

At first, we consider the situation when  condition  (\ref{c3}) dose not holds,

\begin{proof}\textbf{of Theorem~\ref{thm2} }
Since $\mathbb{E}[X_{0}]=\mathbb{E}\left[\sum_{i\in\mathbb{N}_{+}}y_{i}^{(0)}(\xi)\log y_{i}^{(0)}(\xi)\right]\geq 0$, the random walk $\{S_{n}\}$ is recurrent or transient to $+\infty$ with probability 1.

Whenever $\{S_{n}\}$ drifts to $+\infty$, since for $a.e.~\xi$,
$$\phi^{*}\left(T^{n}\xi,ue^{S_{n}(\xi)}\right)\leq \frac{1}{ue^{S_{n}(\xi)}},~~~~~\phi^{*}(\xi,u)\leq 1,$$
using the dominated convergence theorem we have
\begin{eqnarray*}
\lim_{n\to\infty}E_{\xi}\left[\phi^{*}\left(T^{n}\xi,ue^{S_{n}(\xi)}\right)\right]&=&E_{\xi}\lim_{n\to\infty}\left[\phi^{*}\left(T^{n}\xi,ue^{S_{n}(\xi)}\right)\right]\\
&\leq& E_{\xi}\lim_{n\to\infty}\frac{1}{ue^{S_{n}(\xi)}}=0.
\end{eqnarray*}
Thus the right side of the equation $(\ref{2.7})$ is $a.e.$ 0. However, the left side of this equation $(\ref{2.7})$  is strictly positive by Lemma \ref{l1}, a contradiction. As a consequence,  the equation (\ref{samel}) has no $\mathcal{L}_{1}$-solution in this case.

When $\{S_{n}\}$ is persistent, choose a closed interval $I\subset (0,\infty)$, for any $u\in(0,\infty),$ let
$$\tau_{0}(\xi)=0,~~~\tau_{i}(\xi)=\inf\big\{k>\tau_{i-1}(\xi),ue^{S_{n}(\xi)}\in I\big\}(i\in\mathbb{N}_{+}),$$
since $\{S_{n}\}$ is persistent, for any $i\in\mathbb{N},~a.e.~\xi$, we have $\tau_{i}(\xi)<\infty.$ Then from the properties of $A(\xi,u)$ we have for any $k\in\mathbb{N}_{+},~P_{\xi}$-$a.e.,$
\begin{eqnarray}\label{a1}
\sum_{n=0}^{\infty}A\left(T^{n}\xi,ue^{S_{n}(\xi)}\right)&\geq&\sum_{i=1}^{k}A\left(T^{\tau_{i}(\xi)}\xi,ue^{S_{\tau_{i}(\xi)}(\xi)}\right)\nonumber\\
&\geq&\sum_{i=1}^{k}\inf_{j\in I}A\left(T^{\tau_{i}(\xi)}\xi,j\right)
\end{eqnarray}
when $k$ goes to infinity we have for $a.e.~\xi$,
\begin{equation}\label{a2}
\frac{\sum_{i=1}^{k}\inf_{j\in I}A(T^{\tau_{i}(\xi)}\xi,j)}{k}\to\mathbb{E}\inf_{j\in I}A(\xi,j)~~~~P_{\xi}\text{-}a.e.
\end{equation}
Since $I$ is a closed interval and $A(\xi,j)$ is strictly greater than zero when $j>0$, we have for $a.e.~\xi,~\inf_{j\in I}A(\xi,j)>0$, then $\mathbb{E}\inf_{j\in I}A(\xi,j)>0$. Combined (\ref{a1}) and (\ref{a2}) gives for $a.e.~\xi$, $P_{\xi}$-$a.e.,~\sum_{n=0}^{\infty}A\left(T^{n}\xi,ue^{S_{n}(\xi)}\right)=\infty,$ therefore for $a.e.~\xi$,
\begin{equation*}
E_{\xi}\left[\sum_{n=0}^{\infty}A\left(T^{n}\xi,ue^{S_{n}(\xi)}\right)\right]=\infty,
\end{equation*}
that is to say for $a.e.~\xi$, the left side of the equation (\ref{2.7}) is infinity, however
the other terms of equation $(\ref{2.7})$
  are finite  by Lemma \ref{l1}, this gives a contradiction. Thus, in this case, the equation (\ref{samel}) also has no $\mathcal{L}_{1}$-solution.
\qed
\end{proof}

\

When the condition (\ref{c2}) dose not hold, a little bit more attention should be paid to the random environment in the following proofs.

\begin{proof}\textbf{of Theorem~\ref{thm3} }
Since $\mathbb{E}[X_{0}]<0$ we know that for $a.e.~\xi$, $S_{n}(\xi)\to-\infty~P_{\xi}$-$a.e.$ and so the equation (\ref{2.7}) becomes for $a.e.~\xi$,
\begin{equation}\label{new}
E_{\xi}\left[\sum_{n=0}^{\infty}A\left(T^{n}\xi,ue^{S_{n}(\xi)}\right)\right]=1-\phi^{*}(\xi,u).
\end{equation}
For $a.e.~\xi$, let $e^{-\beta(\xi)}=\phi(\xi,1)$, then $\beta(\xi)\leq 1$ and the convex function $e^{\beta(\xi)u}\phi(\xi,u)$ is one at $u=0$ and $u=1$; therefore we have the inequalities for $a.e.~\xi$,
\begin{equation}\label{beta}
\phi(\xi,u)\leq e^{-\beta(\xi)u}~~~0\leq u\leq 1,\quad\quad\quad \phi(\xi,u)\geq e^{-\beta(\xi)u}~~~1\leq u<\infty.
\end{equation}
Then from (\ref{psi}) and (\ref{Au}) we have
\begin{eqnarray}\label{chazhi}
\begin{aligned}
&u\Big(\beta(T\xi)\psi\big(\xi,\beta(T\xi)u\big)-A(\xi,u)\Big)\\
&=E_{\xi}\bigg[\prod_{i\in\mathbb{N}_{+}}e^{-\beta(T\xi)uy_{i}^{(0)}(\xi)}\bigg]-1+\beta(T\xi)u\\
&-E_{\xi}\bigg[\sum_{i\in\mathbb{N}_{+}}
\left(1-\phi\left(T\xi,uy_{i}^{(0)}(\xi)\right)\right)-\bigg(1-\prod_{i\in\mathbb{N}_{+}}\phi\left(T\xi,uy_{i}^{(0)}(\xi)\right)\bigg)\bigg]\\
&=E_{\xi}\bigg[\prod_{i\in\mathbb{N}_{+}}e^{-\beta(T\xi)uy_{i}^{(0)}(\xi)}-\prod_{i\in\mathbb{N}_{+}}\phi\left(T\xi,uy_{i}^{(0)}(\xi)\right)-\sum_{i\in\mathbb{N}_{+}}\left(e^{-\beta(T\xi)uy_{i}^{(0)}(\xi)}-\phi\left(T\xi,uy_{i}^{(0)}(\xi)\right)\right)\\
&+\sum_{i\in\mathbb{N}_{+}}\left(e^{-\beta(T\xi)uy_{i}^{(0)}(\xi)}+\beta(T\xi)uy_{i}^{(0)}(\xi)-1\right)\bigg]
\end{aligned}
\end{eqnarray}

For $a.e.~\xi$, let
$$I(\xi)=\left\{i:y_{i}^{(0)}(\xi)>\frac{1}{u}\right\},\quad\quad\quad I(\xi)^{c}=\left\{i:y_{i}^{(0)}(\xi)\leq\frac{1}{u}\right\}.$$
Then from the inequalities $|\prod_{i}\alpha_{i}-\prod_{\ell}\beta_{\ell}|\leq \sum_{\ell}|\alpha_{\ell}-\beta_{\ell}|(|\alpha_{i}|,|\beta_{\ell}|\leq1)$ and (\ref{beta}),
\begin{eqnarray}
&&\prod_{i\in\mathbb{N}_{+}}e^{\beta(T\xi)uy_{i}^{(0)}(\xi)}-\prod_{i\in\mathbb{N}_{+}}\phi\left(T\xi,uy_{i}^{(0)}(\xi)\right)\nonumber\\
&\leq&\sum_{i\in I(\xi)^{c}}\left(e^{-\beta(T\xi)uy_{i}^{(0)}(\xi)}-\phi\left(T\xi,uy_{i}^{(0)}(\xi)\right)\right)
+\sum_{i\in I(\xi)}\left(\phi\left(T\xi,uy_{i}^{(0)}(\xi)\right)-e^{-\beta(T\xi)uy_{i}^{(0)}(\xi)}\right)\nonumber\\
\end{eqnarray}
Therefore the formula (\ref{chazhi}) yields
\begin{eqnarray*}
&&u\Big(\beta(T\xi)\psi\big(\xi,\beta(T\xi)u\big)-A(\xi,u)\Big)\\
&\leq&E_{\xi}\bigg[2\sum_{i\in I(\xi)}\left(\phi\left(T\xi,uy_{i}^{(0)}(\xi)\right)-e^{-\beta(T\xi)uy_{i}^{(0)}(\xi)}\right)+
\sum_{i\in\mathbb{N}_{+}}\left(e^{-\beta(T\xi)uy_{i}^{(0)}(\xi)}+\beta(T\xi)uy_{i}^{(0)}(\xi)-1\right)\bigg]\\
&\leq& 2E_{\xi}[\# I(\xi)]+E_{\xi}\bigg[\sum_{i\in\mathbb{N}_{+}}\left(e^{-\beta(T\xi)uy_{i}^{(0)}(\xi)}+\beta(T\xi)uy_{i}^{(0)}(\xi)-1\right)\bigg],
\end{eqnarray*}
the last inequality comes from the fact that $M_{1}(T\xi):=\sup\{\phi(T\xi,u)-e^{-\beta(T\xi)u}:u\geq1\}\leq1.$

Note that when $0<u<\frac{1}{e}$, since $\mathbb{E}\Big[\sum_{i\in\mathbb{N}_{+}}y_{i}^{(0)}(\xi)\big(\log^{+}y_{i}^{(0)}(\xi)\big)^{2}\Big]<\infty$, there exists a constant $M_{2}>1$ satisfies $\mathbb{E}\big[\frac{1}{u}(\log\frac{1}{u})^{2}E_{\xi}[\#I(\xi)]\big]\leq M_{2}$, thus
\begin{equation}\label{I1}
\mathbb{E}[\#I(\xi)]\leq uM_{2}(\log u)^{-2}.
\end{equation}
When $u\geq\frac{1}{e}$, since $E_{\xi}\Big[\sum_{i\in\mathbb{N}_{+}}y_{i}^{(0)}(\xi)\Big]=1$ from our assumption, then $E_{\xi}[\#I(\xi)]\frac{1}{u}\leq 1$. Therefore for $a.e.~\xi$,
\begin{equation}\label{I2}
E_{\xi}[\#I(\xi)]\leq u.
\end{equation}
Let
\begin{equation*}
h(u)=
\begin{cases}
(\log u)^{-2}, & 0<u<e^{-1},\\
1, & u\geq e^{-1},
\end{cases}
\end{equation*}
combined (\ref{I1}) and (\ref{I2}) we get
$$\mathbb{E}[\#I(\xi)]\leq M_{2}uh(u).$$
Also since $e^{-u}+u-1\leq u$ and $e^{-u}+u-1\leq u^{2}$, it is easy to establish that $e^{-u}+u-1\leq M_{3}uh(u)$ for some $M_{3}$. Therefore if we set $M=\max\{2M_{2},M_{3}\},$ then $\mathbb{E}\big(\beta(T\xi)\psi(\xi,\beta(T\xi)u)-A(\xi,u)\big)\leq M\Big[h(u)+\mathbb{E}\Big[\sum_{i\in\mathbb{N}_{+}}y_{i}^{(0)}(\xi)h(uy_{i}^{(0)}(\xi))\Big]\Big]$, and rewriting this
\begin{equation}\label{EA}
\mathbb{E}A(\xi,u)\geq\mathbb{E}\Big[\beta(T\xi)\psi(\xi,\beta(T\xi)u)\Big]-M\Big[h(u)+\mathbb{E}\big[h\big(ue^{\widetilde{X}_{0}(\xi)}\big)\big]\Big],
\end{equation}
where when given the environment $\xi,~\widetilde{X}_{0}(\xi)$ is a random variable which is independent of $(X_{n}(\xi))_{n\in\mathbb{N}}$ and has the same distribution function with $X_{0}(\xi)$.

Combing the inequality (\ref{EA}) with the equation (\ref{new}) yields
\begin{eqnarray*}
&&1-\mathbb{E}\phi^{*}(\xi,u)\\
&=&\mathbb{E}\Big[\sum_{n=0}^{\infty}A\big(T^{n}\xi,ue^{S_{n}(\xi)}\big)\Big]=\sum_{n=0}^{\infty}\mathbb{E}\Big[A\big(T^{n}\xi,ue^{S_{n}(\xi)}\big)\Big]\\
&\geq&\sum_{n=0}^{\infty}\left[\mathbb{E}\left(\beta(T^{n+1}\xi)\psi\big(T^{n}\xi,\beta(T^{n+1}\xi)ue^{S_{n}(\xi)}\big)\right)-\mathbb{E}\left(M h\big(ue^{S_{n}(\xi)}\big)+M h\big(ue^{\widetilde{S}_{n}(\xi)}\big)\right)\right],
\end{eqnarray*}
where $\widetilde{S}_{n}(\xi)=S_{n}(\xi)+\widetilde{X}_{0}(\xi)$. Let $\kappa=\mathbb{E}[X_{0}]=\mathbb{E}\Big[\sum_{i\in\mathbb{N}_{+}}y_{i}^{(0)}(\xi)\log y_{i}^{(0)}(\xi)\Big]$, then we can choose $\epsilon>0$ such that $\kappa+\epsilon<0$. let $\ell=\min\{n:n(\kappa+\epsilon)<-1\}$, combined with Lemma \ref{youdong}, we have
\begin{eqnarray*}
\sum_{n=0}^{\infty}\mathbb{E}h\big(e^{S_{n}})&\leq&\sum_{n=0}^{\infty}\mathbb{E}\Big[h(e^{S_{n}})\mathbbm{1}_{\{S_{n}\geq n(\kappa+\epsilon)\}}\Big]+\sum_{n=0}^{\infty}\mathbb{E}\Big[h(e^{S_{n}})\mathbbm{1}_{\{S_{n}< n(\kappa+\epsilon)\}}\Big]\\
&\leq&\sum_{n=0}^{\infty}\mathbb{P}[S_{n}\geq n(\kappa+\epsilon)]+\ell+\sum_{n=\ell}^{\infty}\frac{1}{n^{2}(\kappa+\epsilon)^{2}}<\infty.
\end{eqnarray*}
thus $\sum_{n=0}^{\infty}M\mathbb{E} h\big(e^{S_{n}(\xi)}\big)<\infty$, similarly we have $\sum_{n=0}^{\infty}M\mathbb{E} h\big(e^{\widetilde{S}_{n}(\xi)}\big)<\infty$. Since $$\sum_{n=0}^{\infty}\mathbb{E}\left[Mh\big(e^{S_{n}(\xi)})+Mh\big(e^{\widetilde{S}_{n}(\xi)}\big)\right]<\infty,$$ we have
\begin{eqnarray}\label{1-phi}
&&1-\mathbb{E}\phi^{*}(\xi,1)\\
&\geq&\sum_{n=0}^{\infty}\mathbb{E}\left[\beta(T^{n+1}\xi)\psi\big(T^{n}\xi,\beta(T^{n+1}\xi)e^{S_{n}(\xi)}\big)\right]-\sum_{n=0}^{\infty}\mathbb{E}\left[Mh\big(e^{S_{n}(\xi)})+Mh\big(e^{\widetilde{S}_{n}(\xi)}\big)\right].\nonumber
\end{eqnarray}
On the other hand, from Egorov Theorem there exists $n(\epsilon)<\infty$ satisfies for any $n\geq n(\epsilon)$, $\mathbb{P}\big(S_{n}\geq n(\kappa-\epsilon)\big)\geq c>0.$ Thus
\begin{eqnarray*}
&&\sum_{n=n(\epsilon)}^{\infty}\mathbb{E}\left[\beta(T^{n+1}\xi)\psi\big(T^{n}\xi,\beta(T^{n+1}\xi)e^{S_{n}(\xi)}\big)\right]\\
&\geq&\sum_{n=n(\epsilon)}^{\infty}\mathbb{E}\Big[E_{\xi}\Big[\beta(T^{n+1}\xi)\psi\big(T^{n}\xi,\beta(T^{n+1}\xi)e^{S_{n}(\xi)}\big)\Big|S_{n}(\xi)\geq n(\kappa-\epsilon)\Big]P_{\xi}\big[S_{n}(\xi)\geq n(\kappa-\epsilon)\big]\Big]\\
&\geq&\sum_{n=n(\epsilon)}^{\infty}\mathbb{E}\Big[E_{\xi}\Big[\beta(T^{n+1}\xi)\psi\big(T^{n}\xi,\beta(T^{n+1}\xi)e^{n(\kappa-\epsilon)}\big)\Big]P_{\xi}\big[S_{n}(\xi)\geq n(\kappa-\epsilon)\big]\Big]\\
&=&\sum_{n=n(\epsilon)}^{\infty}\mathbb{E}\Big[\beta(T^{n+1}\xi)\psi\big(T^{n}\xi,\beta(T^{n+1}\xi)e^{n(\kappa-\epsilon)}\big)\Big]\mathbb{P}\Big[S_{n}(\xi)\geq n(\kappa-\epsilon)\Big],
\end{eqnarray*}
the last equality is due to the fact that the distribution of $S_{n}(\xi)$ is only determined by $\xi_{0},\xi_{1},\cdots,\xi_{n-1}$, which is independent with $T^{n}\xi$. Therefore,
\begin{eqnarray*}
&&\sum_{n=n(\epsilon)}^{\infty}\mathbb{E}\left[\beta(T^{n+1}\xi)\psi\big(T^{n}\xi,\beta(T^{n+1}\xi)e^{S_{n}(\xi)}\big)\right]\\
&\geq&c\sum_{n=n(\epsilon)}^{\infty}\mathbb{E}\Big[\beta(T\xi)\psi\big(\xi,\beta(T\xi)e^{n(\kappa-\epsilon)}\big)\Big]\\
&=&c\sum_{n=n(\epsilon)}^{\infty}\int_{\Theta\times A}\beta(T\xi)\psi\big(\xi,\beta(T\xi)e^{n(\kappa-\epsilon)}\big)\mathrm{d}\tau(\xi)+c\sum_{n=n(\epsilon)}^{\infty}\int_{\Theta\times A^{c}}\beta(T\xi)\psi\big(\xi,\beta(T\xi)e^{n(\kappa-\epsilon)}\big)\mathrm{d}\tau(\xi),
\end{eqnarray*}
where $A=\{T\xi:\beta(T\xi)>\delta\}$. Recall that $e^{-\beta(T\xi)}=\phi(T\xi,1)$, $\phi$ is the $\mathcal{L}_{1}$-solution of (\ref{samel}), then we can choose $0<\delta<1$ satisfies $\mathbb{P}(A)>0$. Since $\psi(\xi,u)$ is only determined by $\xi_{0}$ and $\beta(T\xi)$ is independent with $\xi_{0}$ we have
\begin{eqnarray*}
\sum_{n=n(\epsilon)}^{\infty}\int_{\Theta\times A}\beta(T\xi)\psi\big(\xi,\beta(T\xi)e^{n(\kappa-\epsilon)}\big)\mathrm{d}\tau(\xi)
&\geq&\sum_{n=n(\epsilon)}^{\infty}\int_{\Theta\times A}\delta\psi\big(\xi,\delta e^{n(\kappa-\epsilon)}\big)\mathrm{d}\tau(\xi)\\
&=&\sum_{n=n(\epsilon)}^{\infty}\delta\mathbb{E}\psi\big(\xi,\delta e^{n(\kappa-\epsilon)}\big)\mathbb{P}(A).
\end{eqnarray*}
Since $\mathbb{E}\Big[\sum_{i\in\mathbb{N}_{+}}y_{i}^{(0)}(\xi)\Big]=1$ and $\mathbb{E}\Big[\Big(\sum_{i\in\mathbb{N}_{+}}y_{i}^{(0)}(\xi)\Big)\Big|\log\Big(\sum_{i\in\mathbb{N}_{+}}y_{i}^{(0)}(\xi)\Big)\Big|\Big]=\infty$,
using Doney ((1972), Lemma 3.4) we have $\sum_{n=n(\epsilon)}^{\infty}\mathbb{E}\psi\big(\xi,\delta e^{n(\kappa-\epsilon)}\big)=\infty$. Thus
$$\sum_{n=0}^{\infty}\mathbb{E}\left[\beta(T^{n+1}\xi)\psi\big(T^{n}\xi,\beta(T^{n+1}\xi)e^{S_{n}(\xi)}\big)\right]=\infty.$$
Combined with (\ref{1-phi}) we get $1-\mathbb{E}\phi^{*}(\xi,1)=\infty$, but this is a contradiction regarding the definition of $\phi^{*}(\xi,1)$ in (\ref{00}) and the $\mathcal{L}_{1}$-solution.  The proof is finished.
\qed
\end{proof}

\section{Applications in the branching random walk in a random environment}\label{application}

We set $\omega=(\omega_{n},n\in\mathbb{N})$ to be an $i.i.d.$ sequence of random variables with the values in the space of the distributions on the set of point processes on the real line, the law of $\omega$ is given by $\nu$. Conditionally on this sequence the process can be described as follows. At time 0, an initial ancestor, who forms the zeroth generation, is created at the origin. His children form the first generation and their positions on the real line are described by the point process $Z^{1}$ on $\mathbb{R}^{1}$, where $Z^{1}$ is a random locally finite counting measure and the distribution of $Z^{1}$ is determined by $\eta(\omega_{0})$. The people in the $n^{th}$ generation give birth independently of one another and of the preceding generations to form the $(n+1)^{th}$ generation. The point process describing the displacements of the children of a person in the $n^{th}$ generation from this person's position has the same distribution which is determined by $\eta(\omega_{n})$. Let $\{z_{r}^{n}\}$ be an enumeration of the positions of the people in the $n^{th}$ generation, and $Z^{n}$ be the point process with the atoms $\{z_{r}^{n}\}$. Suppose that for $a.e.~\omega,~P_{\omega}\big[Z^{1}(\mathbb{R})<\infty\big]=1$ and $\mathbb{E}\log\big[E_{\xi}Z^{1}(\mathbb{R})\big]>0.$ Define
\begin{equation}
m_{\omega_{i}}(\theta)=E_{T^{i}\omega}\left[\sum_{r}e^{-\theta z_{r}^{1}(T^{i}\omega)}\right]=E_{T^{i}\omega}\left[\int e^{-\theta t}d Z^{1}(t)\right].
\end{equation}

Let $$A=\{\theta:\mathbb{E} m_{\omega_{0}}(\theta)<\infty\},$$ for the following context, we restrict $\theta\in\mathring{A}$ and for each $\theta\in\mathring{A}$, we assume that there exists $\delta(\theta)>0$ satisfies for $a.e.~\omega$, $m_{\omega_{0}}(\theta)>\delta(\theta)$(uniform ellipticity condition). Observe that when given the environment, for $a.e.~\omega$,
\begin{equation}
W_{n}(\omega,\theta):=\frac{\sum_{r}e^{-\theta z_{r}^{n}}}{E_{\omega}\big[\sum_{r}e^{-\theta z_{r}^{n}}\big]}=\frac{\sum_{r}e^{-\theta z_{r}^{n}}}{m_{\omega_{0}}(\theta)\cdots m_{\omega_{n-1}}(\theta)}
\end{equation}
is a martingale with respect to the $\sigma$-field $\mathcal{F}_{n}(\omega)$, where $\mathcal{F}_{n}(\omega)$ is the $\sigma$-field generated by $Z^{1},Z^{2},\cdots,Z^{n}$ and $\omega$. Thus for $a.e.~\omega$, $W_{n}(\omega,\theta)$ has an almost sure limit, $W(\omega,\theta)$, and by Fatou's lemma $E_{\omega}[W(\omega,\theta)]\leq 1.$

For $n\in\mathbb{N}$, let $y_{i}^{(n)}(\omega)=\frac{e^{-\theta z_{i}^{1}(T^{n}\omega)}}{m_{\omega_{n}}(\theta)}$,(for those $y_{i}^{(n)}(\omega)(i\in\mathbb{N}_{+})$ without definition, we suppose them equal to zero), then obviously for each $n\in\mathbb{N}$, $(y_{i}^{(n)}(\omega))_{i\in\mathbb{N}_{+}}$ is a point process satisfies $E_{\xi}\big[\sum_{i\in\mathbb{N}_{+}}y_{i}^{(n)}(\xi)\big]=1$, what's more, using the way similar as \cite{BI77} (page 26), we have
\begin{equation}\label{Wn}
W_{n}(\omega,\theta)\overset{d}{=}\sum_{i\in\mathbb{N}_{+}}y_{i}^{(0)}(\omega)_{i}W_{n-1}(T\omega,\theta),
\end{equation}
where when given $\mathcal{F}_{1}(\omega),~ \{_{i}W_{n-1}(T\omega,\theta)\}$ are independent copies of $W_{n-1}(T\omega,\theta)$, if we now let $n$ tend to infinity we see that
\begin{equation}
W(\omega,\theta)\overset{d}{=}\sum_{i\in\mathbb{N}_{+}}y_{i}^{(0)}(\omega)_{i}W(T\omega,\theta),
\end{equation}
where when given $\mathcal{F}_{1}(\omega),~ \{_{i}W(T\omega,\theta)\}$ are independent copies of $W(T\omega,\theta)$ and the quenched Laplace transform of $W(\omega,\theta)$, $\hat{\phi}(\omega,u)=E_{\omega}e^{-uW(\omega,\theta)}$ satisfies (\ref{samel}).

Whenever $\mathbb{E}[W(\theta)]=c>0$ then $\hat{\phi}(\omega,\frac{u}{c})$ is an $\mathcal{L}_{1}$-solution to (\ref{samel}). Thus whenever (\ref{samel}) fails to have an $\mathcal{L}_{1}$-solution we must have $\mathbb{E}W(\theta)=0$ and then for $a.e.~\omega$, $E_{\omega}W(\theta)=0$.

Since $e^{-uW_{n}(\omega,\theta)}$ is a bounded submartingale converging to $e^{-uW(\omega,\theta)}$, we know that $$E_{\omega}[e^{-uW_{n}(\omega,\theta)}]\longrightarrow E_{\omega}[e^{-uW(\omega,\theta)}]=\hat{\phi}(\omega,u).$$
If we let $\phi_{n}(\omega,u)=E_{\omega}\big[e^{-uW_{n}(\omega,\theta)}\big]$, then we have for $a.e.~\omega$, $\phi_{0}(\omega,u)=e^{-u}$, and from (\ref{Wn}) $\phi_{n+1}(\omega,u)=E_{\omega}\big[\prod_{i\in\mathbb{N}_{+}}\phi_{n}\big(T\omega,uy_{i}^{(0)}(\omega)\big)\big]=H\phi_{n}(T\omega,u).$ Thus this definition is consistent with that given at (\ref{diedai}). Then when the condition of Theorem \ref{thm1} holds, $\hat{\phi}(\omega,u)$ is an $\mathcal{L}_{1}$-solution to the equation (\ref{samel}), then $\mathbb{E}W(\omega,\theta)=1$.

Note that in this model,
\begin{eqnarray}
\mathbb{E}\bigg[\Big(\sum_{i\in\mathbb{N}_{+}}y_{i}^{(0)}\Big)\Big|\log\Big(\sum_{i\in\mathbb{N}_{+}}y_{i}^{(0)}\Big)\Big|\bigg]
&=&\mathbb{E}\bigg[\Big(\sum_{i\in\mathbb{N}_{+}}\frac{e^{-\theta z_{i}^{1}(\omega)}}{m_{\omega_{0}}(\theta)}\Big)\Big|\log\Big(\sum_{i\in\mathbb{N}_{+}}\frac{e^{-\theta z_{i}^{1}(\omega)}}{m_{\omega_{0}}(\theta)}\Big)\Big|\bigg]\nonumber\\
&=&\mathbb{E}W_{1}(\theta)\big|\log W_{1}(\theta)\big|,
\end{eqnarray}
\begin{eqnarray}
\mathbb{E}\bigg[\sum_{i\in\mathbb{N}_{+}}y_{i}^{(0)}\log y_{i}^{(0)}\bigg]
&=&\mathbb{E}\bigg[\sum_{i\in\mathbb{N}_{+}}\frac{e^{-\theta z_{i}^{1}(\omega)}}{m_{\omega_{0}}(\theta)}\log\frac{e^{-\theta z_{i}^{1}(\omega)}}{m_{\omega_{0}}(\theta)}\bigg]\nonumber\\
&=&\mathbb{E}\bigg[\sum_{i\in\mathbb{N}_{+}}\frac{e^{-\theta z_{i}^{1}(\omega)}}{m_{\omega_{0}}(\theta)}\Big(-\theta z_{i}^{1}(\omega)-\log m_{\omega_{0}}(\theta)\Big)\bigg]\nonumber\\
&=&-\int\Big[-\theta\frac{m_{\omega_{0}}^{'}(\theta)}{m_{\omega_{0}}(\theta)}+\log m_{\omega_{0}}(\theta)\Big]\eta(\mathrm{d}\omega),
\end{eqnarray}

A straightforward calculation shows that the uniform ellipticity condition ensure that
\begin{eqnarray}
\mathbb{E}\bigg[\sum_{i\in\mathbb{N}_{+}}y_{i}^{(0)}\Big(\log^{+} y_{i}^{(0)}\Big)^{2}\bigg]
=\mathbb{E}\bigg[\sum_{i\in\mathbb{N}_{+}}\frac{e^{-\theta z_{i}^{1}(\omega)}}{m_{\omega_{0}}(\theta)}\Big(\log^{+}\frac{e^{-\theta z_{i}^{1}(\omega)}}{m_{\omega_{0}}(\theta)}\Big)^{2}\bigg]<\infty
\end{eqnarray}
holds for all $\theta\in\mathring{A}$. Therefore combine Theorem \ref{thm1}, \ref{thm2}, \ref{thm3} we have the following theorem.

\begin{theorem}\label{brw}
For any $\theta\in\mathring{A},$ assume that $$\kappa=\int\Big[-\theta\frac{m_{\omega_{0}}^{'}(\theta)}{m_{\omega_{0}}(\theta)}+\log m_{\omega_{0}}(\theta)\Big]\nu(\mathrm{d}\omega)$$
exists. Then $$\mathbb{E}[W(\theta)]=1$$
if and only if
\begin{equation}\label{con}
\mathbb{E}[W_{1}(\theta)|\log W_{1}(\theta)|]<\infty~~~~~\text{and}~~~~~\kappa>0
\end{equation}
and $\mathbb{E}W(\theta)=0$ when either of the conditions in (\ref{con}) fails.
\end{theorem}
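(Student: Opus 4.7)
The plan is to view Theorem \ref{brw} as a direct corollary of Theorems \ref{thm1}, \ref{thm2}, \ref{thm3} applied to the specific weight sequence $y_i^{(0)}(\omega) = e^{-\theta z_i^1(\omega)}/m_{\omega_0}(\theta)$ of the BRWre. Essentially all the structural work has already been done in the paragraphs preceding the theorem: assumption (\ref{assume}) holds, the iterates $\phi_n(\omega,u) = E_\omega e^{-uW_n(\omega,\theta)}$ coincide with the canonical iteration (\ref{diedai}) and converge to $\hat{\phi}(\omega,u) = E_\omega e^{-uW(\omega,\theta)}$, and the three moment quantities that appear in the hypotheses of Theorems \ref{thm1}--\ref{thm3} have been identified as
\[
\mathbb{E}\Big[\sum y_i^{(0)}(\log^+ y_i^{(0)})^2\Big] < \infty,\quad \mathbb{E}\Big[\sum y_i^{(0)}\log y_i^{(0)}\Big] = -\kappa,\quad \mathbb{E}\Big[(\textstyle\sum y_i^{(0)})|\log\sum y_i^{(0)}|\Big] = \mathbb{E}[W_1(\theta)|\log W_1(\theta)|].
\]
Thus condition (\ref{c1}) is automatic from uniform ellipticity, condition (\ref{c3}) reads $\kappa>0$, and condition (\ref{c2}) reads $\mathbb{E}[W_1(\theta)|\log W_1(\theta)|]<\infty$. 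The proof then just assembles the three cases.

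For the \emph{if} direction, I assume (\ref{con}) holds. Then all three hypotheses (\ref{c1})--(\ref{c3}) of Theorem \ref{thm1} are verified, so (\ref{samel}) has an $\mathcal{L}_1$-solution constructed as $\lim_n \phi_n(\omega,u)$. Since this limit is exactly $\hat{\phi}(\omega,u)$, the function $\hat{\phi}$ itself lies in $\mathcal{L}_1$, and Lemma \ref{l1} then yields $E_\omega W(\omega,\theta) = 1$ for $\tau$-a.e.\ $\omega$, hence $\mathbb{E}[W(\theta)] = 1$.

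For the \emph{only if} direction, I use the contrapositive. Suppose $\mathbb{E}[W(\theta)] = c > 0$; I will show that both halves of (\ref{con}) must hold. By the rescaling already noted in the Remark following Lemma \ref{l1}, $\hat{\phi}(\omega,u/c)$ is then an $\mathcal{L}_1$-solution of (\ref{samel}). If $\kappa \leq 0$, then $\mathbb{E}[\sum y_i^{(0)}\log y_i^{(0)}] = -\kappa \geq 0$, so Theorem \ref{thm2} rules out the existence of any $\mathcal{L}_1$-solution, contradiction; hence $\kappa > 0$. Assuming $\kappa > 0$ (and finite, by the hypothesis that $\kappa$ exists), if in addition $\mathbb{E}[W_1(\theta)|\log W_1(\theta)|] = \infty$, then Theorem \ref{thm3} again rules out any $\mathcal{L}_1$-solution, a contradiction; hence the first half of (\ref{con}) also holds. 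Combining these, whenever either part of (\ref{con}) fails we must have $\mathbb{E}[W(\theta)] = 0$.

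The main obstacle is conceptual rather than technical: one must be comfortable with the translation between the BRWre martingale $W_n(\omega,\theta)$ and the generic smoothing transform setup, and in particular with the identification $\lim_n \phi_n = \hat{\phi}$ that allows the Theorem \ref{thm1} output to be recognized as the quenched Laplace transform of $W(\omega,\theta)$. Once this identification and the three moment translations are in hand, the proof reduces to case analysis on the sign of $\kappa$ and the finiteness of $\mathbb{E}[W_1(\theta)|\log W_1(\theta)|]$, with Theorems \ref{thm1}--\ref{thm3} providing the existence/nonexistence of $\mathcal{L}_1$-solutions in each case.
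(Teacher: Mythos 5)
Your proposal is correct and follows the same route the paper takes: the paper's ``proof'' of Theorem \ref{brw} is precisely the preceding paragraph's translation of the three moment conditions (automatic \eqref{c1} via uniform ellipticity, \eqref{c3} $\Leftrightarrow \kappa>0$, \eqref{c2} $\Leftrightarrow \mathbb{E}[W_1|\log W_1|]<\infty$), the identification $\lim_n\phi_n=\hat\phi$, and the dichotomy $\mathbb{E}W(\theta)\in\{0,1\}$, after which Theorems \ref{thm1}--\ref{thm3} are invoked exactly as you do. Your write-up simply makes explicit the case analysis the paper leaves to the reader in the sentence ``Therefore combine Theorem \ref{thm1}, \ref{thm2}, \ref{thm3}.''
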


\begin{remark}
Here we get Theorem \ref{brw}, the Biggins martingale convergence theorem (in random environment) by using analytical method. For the technical reason, we require the the uniform ellipticity condition. Indeed this result has been given in \cite{BK04} and \cite{KD04} by using probabilistic method.
\end{remark}

\end{document}